\theoremstyle{plain}
\newtheorem{theorem}{Theorem}[section]
\newtheorem{lemma}[theorem]{Lemma}
\newtheorem{corollary}[theorem]{Corollary}
\newtheorem{conjecture}[theorem]{Conjecture}
\newtheorem*{theorem*}{Theorem}
\newtheorem*{claim*}{Claim}
\theoremstyle{definition}
\newtheorem{remark}[theorem]{Remark}
\newcommand{\lf}{\left\lfloor}
\newcommand{\rf}{\right\rfloor}
\newcommand{\lc}{\left\lceil}
\newcommand{\rc}{\right\rceil}
\newcommand{\F}{\mathcal{F}}
\renewcommand{\P}{\mathcal{P}}
\newcommand{\s}{\textbf{s}}
\newcommand{\R}{\mathbb{R}}
\newcommand{\Z}{\mathbb{Z}}
\newcommand{\conv}{\mathrm{conv}}
\newcommand{\KG}{\mathrm{KG}}
\renewcommand{\F}{\mathcal{F}}
\newcommand{\tcd}{\mathrm{tcd}}
\begin{document}

\title[On the generalized Erd\H os--Kneser conjecture]{On the generalized Erd\H os--Kneser conjecture:\\ proofs and reductions}



\author{Jai Aslam} 
\address[JA]{Dept.\ Math., Northeastern University, Boston, MA}
\email{aslam.j@husky.neu.edu} 

\author{Shuli Chen}
\address[SC]{Dept.\ Math., Cornell University, Ithaca, NY}
\email{sc2586@cornell.edu} 

\author{Ethan Coldren}
\address[EC]{Dept.\ Math., Colorado State University, Fort Collins, CO}
\email{ethancorvids@gmail.com} 

\author{Florian Frick}
\address[FF]{Mathematical Sciences Research Institute, Berkeley, CA}
\address{Dept.\ Math., Cornell University, Ithaca, NY}
\email{ff238@cornell.edu} 

\author{Linus Setiabrata}
\address[LS]{Dept.\ Math., Cornell University, Ithaca, NY}
\email{ls823@cornell.edu} 

\date{\today}
\maketitle


\begin{abstract}
\small
Alon, Frankl, and Lov\'asz proved a conjecture of Erd\H os that one needs at least $\lceil \frac{n-r(k-1)}{r-1} \rceil$ 
colors to color the $k$-subsets of $\{1, \dots, n\}$ such that any $r$ of the $k$-subsets that have the same color 
are not pairwise disjoint. A generalization of this problem where one requires $s$-wise instead of pairwise intersections was
considered by Sarkaria. He claimed a proof of a generalized Erd\H os--Kneser conjecture establishing
a lower bound for the number of colors that reduces to Erd\H os' original conjecture for~${s = 2}$.
Lange and Ziegler pointed out that his proof fails whenever $r$ is not a prime.
Here we establish this generalized Erd\H os--Kneser conjecture for every~$r$, as long as $s$ is not too
close to~$r$. Our result encompasses earlier results but is significantly more general.
We discuss relations of our results to conjectures of Ziegler and of Abyazi Sani and Alishahi, and prove
the latter in several cases.
\end{abstract}

\section{Introduction}

Lov\'asz~\cite{lovasz1978} proved a conjecture of Kneser~\cite{kneser1955} that at least $n-2(k-1)$ colors are 
required in any coloring of the $k$-element subsets of $[n] = \{1, \dots, n\}$ to ensure that any two $k$-subsets 
of the same color intersect. We will denote the $k$-element subsets of a set $X$ by~$\binom{X}{k}$. 
Erd\H os~\cite{erdos1973} conjectured that at least $\lceil \frac{n-r(k-1)}{r-1} \rceil$
colors are required in any coloring of $\binom{[n]}{k}$ to ensure that any $r$ elements of $\binom{[n]}{k}$ that 
receive the same color are not pairwise disjoint. This was proven by Alon, Frankl, and Lov\'asz~\cite{alon1986}. 

Sarkaria~\cite{sarkaria1990} considered the natural extension, where one seeks to understand this combinatorial 
question beyond the case of pairwise intersections: He showed that for $s\ge 2$ at least $\lceil \frac{(s-1)n-r(k-1)}{r-1} \rceil$ 
colors are required in any coloring of $\binom{[n]}{k}$ to ensure that given any $r$ (not necessarily distinct) elements 
of $\binom{[n]}{k}$ that receive the same color some $s$ of them intersect, provided that $r$ is a prime. While 
Sarkaria claimed this result for arbitrary~$r$, his reduction to the prime case contained an error and only works 
if $s$ is at most the largest prime divisor of~$r$; see Lange and Ziegler~\cite{lange2007} and Ziegler~\cite{ziegler2006}. 
Thus for larger $s$ and composite $r$ this generalized Erd\H os--Kneser conjecture has remained open.

Here we prove the generalized Erd\H os--Kneser conjecture for arbitrary~$r$, and $s$ not too close to~$r$ (see
Theorem~\ref{thm:main2} where we prove a more general statement):

\begin{theorem}
\label{thm:main}
	Let $r\ge 2$, $k \ge 1$, $n \ge sk$ be integers. Let $r = 2^{\alpha_0}\cdot p_1^{\alpha_1} \cdots p_t^{\alpha_t}$
	be the prime factorization of $r$, i.e., the $p_i$ are pairwise distinct odd primes and $\alpha_i \in \Z_{\ge 0}$. Let 
	$2 \le s \le 2^{\alpha_0}\cdot (p_1-1)^{\alpha_1} \cdots (p_t-1)^{\alpha_t}$.
	Then at least $\lceil \frac{(s-1)n-r(k-1)}{r-1} \rceil$ colors are required in any coloring
	of $\binom{[n]}{k}$ to ensure that given any $r$ (not necessarily distinct) elements of $\binom{[n]}{k}$ that 
	receive the same color some $s$ of them intersect.
\end{theorem}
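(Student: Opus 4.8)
The plan is to recast the statement as a lower bound on the chromatic number of a Kneser-type hypergraph and then to attack that bound by equivariant topology, following the scheme of Sarkaria and its refinements by Kř\'i\v{z} and Ziegler. Concretely, let $\KG^{r}_{s}(\binom{[n]}{k})$ be the hypergraph whose vertices are the $k$-subsets of $[n]$ and whose hyperedges are the $r$-tuples $(A_1,\dots,A_r)$ (repetitions allowed) that are \emph{$s$-wise disjoint}, meaning $\bigcap_{i\in S}A_i=\emptyset$ for every $S\in\binom{[r]}{s}$. A coloring is admissible exactly when it has no monochromatic hyperedge, so the quantity to bound from below is $\chi(\KG^{r}_{s}(\binom{[n]}{k}))$. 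First I would invoke the standard reduction bounding this chromatic number from below by $\tfrac{1}{r-1}$ times a $\Z_r$-coindex of a configuration space, equivalently the nonexistence of a $\Z_r$-map into the color space of an $m$-coloring, detected by a Borsuk--Ulam--type obstruction. The goal is to show that too few colors would produce a $\Z_r$-equivariant map out of an $s$-wise deleted join that the topology forbids.

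The second step is the connectivity/index bookkeeping that produces the numerator $(s-1)n-r(k-1)$. The relevant configuration space is the $s$-wise deleted $r$-fold join associated to $\binom{[n]}{k}$. Inside the $s$-wise deleted join of the full $(n-1)$-simplex on $[n]$ one finds a free $\Z_r$-sphere of dimension roughly $(s-1)n-1$, and passing from the full simplex down to $k$-subsets lowers the available dimension by $r(k-1)$. A $\Z_r$-coindex of at least $(s-1)n-r(k-1)-1$ for the configuration space then forces, through the Sarkaria--Ziegler inequality of the first step, at least $\lceil\tfrac{(s-1)n-r(k-1)}{r-1}\rceil$ colors in any admissible coloring.

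The heart of the argument, and precisely the step where Sarkaria's reduction broke down for composite $r$, is supplying the Borsuk--Ulam obstruction for a group $\Z_r$ that need not be a prime power; this is exactly where the hypothesis $s\le 2^{\alpha_0}(p_1-1)^{\alpha_1}\cdots(p_t-1)^{\alpha_t}$ is used. I would treat each prime-power factor $p^{\alpha}\,\|\,r$ separately, exploiting that $\Z_{p^{\alpha}}$ (or $(\Z_p)^{\alpha}$) does admit a usable Fadell--Husseini/Volovikov index theory, with an $s$-wise connectivity threshold of $(p-1)^{\alpha}$ for odd $p$ and $2^{\alpha_0}$ for $p=2$. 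These prime-power obstructions must then be assembled for $\Z_r\cong\prod_i\Z_{p_i^{\alpha_i}}$, and the main technical obstacle is that the $s$-wise condition behaves \emph{multiplicatively} rather than additively under this decomposition: the per-factor capacities multiply to give the stated product bound on $s$. Consequently the factors are combined through a product construction on the configuration spaces rather than a join, and the crux is to verify that the $s$-wise intersection pattern is preserved under this product and that the resulting index estimate stays sharp. Running the same machinery with the sphere replaced by a more flexible class of complexes yields the stronger Theorem~\ref{thm:main2}, from which Theorem~\ref{thm:main} follows as the special case $\mathcal{F}=\binom{[n]}{k}$.
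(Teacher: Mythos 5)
Your first two steps run parallel to known machinery (the paper phrases the prime case via the topological $r$-colorability defect $\tcd^r$ and transversal partitions, Lemma~\ref{lem:tcd}, rather than via $s$-wise deleted joins, but the bookkeeping producing $(s-1)n-r(k-1)$ is morally the same). The genuine gap is your third step. You propose to manufacture a $\Z_r$-equivariant Borsuk--Ulam obstruction for composite $r$ by treating each prime-power factor with Fadell--Husseini/Volovikov index theory and then ``assembling'' the factors through a product construction on configuration spaces, with the per-factor capacities multiplying to $b(r)=2^{\alpha_0}(p_1-1)^{\alpha_1}\cdots(p_t-1)^{\alpha_t}$. This is exactly the step at which Sarkaria's argument collapsed (as Lange and Ziegler pointed out), and the failure is not a fixable technicality: for $r$ not a prime power, the relevant equivariant maps typically \emph{do} exist --- this is the content of \"Ozaydin's work and of the counterexamples to the topological Tverberg conjecture --- so the obstruction you want to assemble cannot exist at the needed level of generality, and no index calculus for $\Z_r\cong\prod_i\Z_{p_i^{\alpha_i}}$ is known that multiplies the per-factor thresholds. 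Writing that ``the crux is to verify that the $s$-wise intersection pattern is preserved under this product and that the resulting index estimate stays sharp'' names the obstacle without supplying any mechanism to overcome it; as it stands your proof covers only prime powers.

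The paper avoids topology for composite $r$ altogether, and the two ideas it uses instead are worth internalizing. First, a blow-up reduction (Lemma~\ref{lem:transversal-red}): replace each $i\in[n]$ by a block $P_i$ of $s_i$ new elements, which converts the $s$-wise disjointness condition into ordinary \emph{pairwise} disjointness of transversal $k$-sets, giving $\chi(\KG^r_{\s}(n,k)) \ge \chi(\KG^r(N,k;\P))$ with $N=\sum_i s_i$; topology is then needed only for prime $r$ and parts of size at most $r-1$ (Lemma~\ref{lem:bounds}, resting on the optimal colored Tverberg theorem), while the factor $2^{\alpha_0}$ with parts of size up to $2^{\alpha_0}$ comes combinatorially from Schrijver's theorem via Alon--Drewnowski--{\L}uczak (Lemma~\ref{lem:2^t}). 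Second, the multiplicativity of the capacity $b(r)$ is obtained by a purely combinatorial two-stage coloring induction on the prime factorization (Lemma~\ref{lem:prime-ind}): a partition with parts of size at most $b_1b_2$ is refined into subparts of size at most $b_1$, at most $b_2$ per part; one first extracts, inside each suitable $K_1$-element set, a monochromatic pairwise disjoint $r_2$-tuple of $k$-sets, then colors the $K_1$-sets accordingly and extracts a monochromatic disjoint $r_1$-tuple of them, yielding $r_1r_2$ pairwise disjoint monochromatic $k$-sets. That combinatorial product over the prime factorization is what replaces your proposed topological assembly --- and, given the non-prime-power counterexamples, it is the step that cannot be done topologically.
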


Throughout the manuscript for an integer $r \ge 2$ with prime factorization $2^{\alpha_0}\cdot p_1^{\alpha_1} \cdots p_t^{\alpha_t}$
we define $b(r) = 2^{\alpha_0}\cdot (p_1-1)^{\alpha_1} \cdots (p_t-1)^{\alpha_t}$.
We emphasize that we distinguish ``intersect'' and ``pairwise intersect'', where $s$ sets intersect by definition if they all share
a common element.
This lower bound on the number of colors is known to be not optimal; see Lange and Ziegler~\cite{lange2007}. In fact, they conjectured
that their upper bound of $1+\lc \frac{1}{\lc \frac{r-1}{s-1} \rc}\frac{(s-1)n-rk+1}{s-1} \rc$ is tight.

Our proof does not rely on Sarkaria's result, but instead employs techniques recently developed by the fourth author~\cite{frick2017, frick2017-2}.
In particular, our approach also yields new proofs of earlier results. We relate the generalized Erd\H os--Kneser conjecture
to several other problems: Among others we observe that it follows from extending the optimal colored Tverberg theorem of
Blagojevi\'c, Matschke, and Ziegler~\cite{blagojevic2009} from primes $r$ to prime powers; see Remark~\ref{rem}.

We present some preliminaries on hypergraphs such as the standard translation of the generalized Erd\H os--Kneser conjecture 
into a hypergraph coloring problem in Section~\ref{sec:transition}. Section~\ref{sec:proof} contains the proof of our main
result and relations to other problems. We discuss a conjecture of Abyazi Sani and Alishahi in Section~\ref{sec:conj}
and prove it in several cases. In fact, we prove a strengthening of the conjecture in those cases in Section~\ref{sec:subhyp}; 
see Theorem~\ref{thm:t-wide}. We use this to establish that one can restrict to a natural subfamily of all $k$-element 
subsets of~$[n]$ and still observe the same intersection pattern as in Theorem~\ref{thm:main}; see Corollary~\ref{cor:subhyp}.

\section{Transition to hypergraph colorings}
\label{sec:transition}

Given a ground set $X$ and an integer $r\ge 2$ an \emph{$r$-uniform hypergraph} (with multiplicities) $H$ 
is a set of $r$-element multisubsets of~$X$, that is, repetitions of elements are allowed.
Elements of $X$ are the \emph{vertices} of $H$, while elements of~$H$ are called \emph{hyperedges}.
An \emph{$m$-coloring} of $H$ for an integer~${m \ge 1}$ is a map $c\colon X \longrightarrow [m]$ such that 
every hyperedge~${\sigma \in H}$ contains vertices $x,y \in \sigma$ with different colors ${c(x) \ne c(y)}$. The smallest
integer $m$ such that $H$ admits an $m$-coloring is the \emph{chromatic number} of~$H$, denoted by~$\chi(H)$.

There is a standard reformulation of Kneser's conjecture and its relatives in the language of hypergraph colorings. 
The vertex set is~$\binom{[n]}{k}$. If $\KG^r(n,k)$ is the hypergraph whose hyperedges are collections of $r$ pairwise
disjoint $k$-subsets in~$[n]$, then the result of Alon, Frankl, and Lov\'asz states that $\chi(\KG^r(n,k)) \ge \lceil \frac{n-r(k-1)}{r-1} \rceil$.
A simple greedy coloring shows that this lower bound is optimal.

Following Lange and Ziegler~\cite{lange2007}, 
denote by $\KG^r_{s-1}(n,k)$ the hypergraph on vertex set~$\binom{[n]}{k}$, where a multisubset $\{\{A_1, \dots, A_r\}\}$
of $\binom{[n]}{k}$ forms a hyperedge of $\KG^r_{s-1}(n,k)$ if and only if $A_{i_1} \cap \dots \cap A_{i_s} = \emptyset$ 
for any pairwise distinct $i_1, \dots, i_s \in [r]$. In Theorem~\ref{thm:main} we seek to establish a lower bound for 
the chromatic number of~$\KG^r_{s-1}(n,k)$. Indeed, if $\chi(\KG^r_{s-1}(n,k)) \ge \lceil \frac{(s-1)n-r(k-1)}{r-1} \rceil$ 
then in any coloring with less than $\lceil \frac{(s-1)n-r(k-1)}{r-1} \rceil$ colors there are $r$ (not necessarily distinct)
$k$-element sets $A_1, \dots, A_r$ that have the same color and form a hyperedge of~$\KG^r_{s-1}(n,k)$, that is,
any $s$ of them (with multiplicities) have empty intersection.
More generally, for $\s = (s_1, \dots, s_n)$ with $s_i \in [r-1]$, Lange and Ziegler define the hypergraph $\KG^r_\s(n,k)$ 
on vertex set~$\binom{[n]}{k}$, where a multisubset $\{\{A_1, \dots, A_r\}\}$ of $\binom{[n]}{k}$ forms a hyperedge of 
$\KG^r_\s(n,k)$ if and only if each $i \in [n]$ is contained in at most $s_i$ of the sets $A_1, \dots, A_r$. 
In particular, $\KG^r_{s-1}(n,k) = \KG^r_{(s-1, \dots, s-1)}(n,k)$.

We will establish lower bounds for $\chi(\KG^r_{s-1}(n,k))$ 
by comparing it to the chromatic number of a Kneser hypergraph whose vertex set consists of transversals
of a partition of~$[n]$: Let $\P = \{P_1, \dots, P_\ell\}$ be a partition of $[n]$; denote by $\KG^r(n,k; \P)$ 
(or by $\KG^r(n,k; P_1, \dots, P_\ell)$) the hypergraph whose vertices are $k$-element subsets 
$\sigma \subset [n]$ with $|\sigma \cap P_i| \le 1$ for all $i \in [\ell]$, and whose hyperedges consists
of $r$ such sets that are pairwise disjoint. More generally, if $\F$ is any system of subsets of~$[n]$,
let $\KG^r(\F; \P)$ be the hypergraph whose vertices correspond to those $\sigma \in \F$ with $|\sigma\cap P_i| \le 1$
for all~${i \in [\ell]}$, and with a hyperedge for any collection of $r$ vertices that correspond to pairwise disjoint
sets.

Lower bounds for the chromatic number of $\KG^r(\F; \P)$ follow from~\cite{frick2017-2} provided that $r$ is a prime and
the partition consists of sets of size at most~${r-1}$:

\begin{lemma}[{\cite[Thm.~4.7]{frick2017-2}}]
\label{lem:tcd}
	Let $\F$ be a system of subsets of $[n]$, let $r$ be a prime, and let $\P = \{P_1, \dots, P_\ell\}$ be a partition
	of $[n]$ with~${|P_i| \le r-1}$ for all~${i \in [\ell]}$. Then 
	$$\chi(\KG^r(\F;\P)) \ge \lc \frac{\tcd^r(\F)}{r-1} \rc.$$
\end{lemma}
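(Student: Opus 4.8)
The plan is to prove the bound by equivariant topology, in the spirit of the Dolnikov--Kří\v z colorability-defect bound $\chi(\KG^r(\F)) \ge \lc \cd^r(\F)/(r-1) \rc$ and its refinements, using crucially that $r$ is prime so that $\Z_r$ acts freely on a representation sphere. Let $c$ be an arbitrary proper coloring of $\KG^r(\F;\P)$ with $m$ colors. Since $\chi$ is the minimum such $m$, it suffices to show that every such $c$ satisfies $m(r-1) \ge \tcd^r(\F)$, for then $m \ge \lc \tcd^r(\F)/(r-1) \rc$. I would argue by contradiction, assuming $m(r-1) < \tcd^r(\F)$ and producing a forbidden $\Z_r$-equivariant map.

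First I would set up the relevant configuration space. The vertices of $\KG^r(\F;\P)$ are those $\sigma \in \F$ that are transversals of $\P$, i.e.\ $|\sigma \cap P_i| \le 1$ for all $i$, and a hyperedge is an $r$-tuple of pairwise disjoint such transversals. I would encode a point of the $\Z_r$-space as a choice, for each ground element of $[n]$, of one of $r$ ``slots'', that is, a point of the $r$-fold deleted join of $[n]$ (a sphere carrying a free $\Z_r$-action), restricted to configurations compatible with transversals of $\P$. The hypothesis $|P_i| \le r-1$ is exactly what is needed here: it guarantees that, after restricting the deleted join to transversals of $\P$, the $\Z_r$-action remains free and the $\Z_r$-coindex of the resulting space is controlled from below by $\tcd^r(\F)$. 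This is where the ``twist'' recorded by $\tcd$ enters, and it is the technical heart of the argument.

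From the coloring $c$ I would then build a $\Z_r$-equivariant map from this configuration space to a join of representation spheres of total (free) dimension about $m(r-1)-1$: each of the $m$ color classes contributes one block, and because no color class contains a hyperedge (no $r$ pairwise disjoint monochromatic transversals), the map on that block can be steered to avoid the thick diagonal and land in a free sphere $S^{m(r-1)-1}$. Finally I would invoke Dold's theorem (equivalently, a $\Z_p$-Tucker / $\Z_p$-Fan combinatorial lemma, a discrete Borsuk--Ulam statement valid for prime $r$): a $\Z_r$-map from a space of coindex at least $\tcd^r(\F)-1$ into a free sphere $S^{m(r-1)-1}$ cannot exist once $m(r-1) < \tcd^r(\F)$, giving the desired contradiction. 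The main obstacle, and the step requiring genuine care, is the coindex estimate of the transversal-restricted deleted join in terms of $\tcd^r(\F)$ together with the verification that $|P_i| \le r-1$ preserves freeness of the action --- precisely the place where the primality of $r$ is indispensable.
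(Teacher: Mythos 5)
Your proposal does not match how this statement is actually established: the paper gives no proof of Lemma~\ref{lem:tcd} at all, importing it from \cite[Thm.~4.7]{frick2017-2}, and Remark~\ref{rem} discloses the mechanism used there: the definition of $\tcd^r(\F)$ supplies an auxiliary simplicial complex $K$ on $N \ge n$ vertices (whose minimal nonfaces are the minimal sets of $\F$) together with a continuous map $f\colon K \to \R^d$ admitting no $r$-fold coincidence on pairwise disjoint faces; a proper coloring with too few colors is combined with $f$ by the constraint method, and the contradiction comes from the \emph{optimal colored Tverberg theorem} of Blagojevi\'c, Matschke, and Ziegler \cite{blagojevic2009}, with the parts $P_i$ playing the role of Tverberg color classes of size $\le r-1$ and the primality of $r$ entering as the hypothesis of that theorem. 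Your sketch never touches this data, and this is a genuine gap, not a stylistic difference. You build a configuration space from $[n]$, $r$, and $\P$ alone (the $r$-fold deleted join of the ground set restricted to transversal configurations) and assert its $\Z_r$-coindex is bounded below by $\tcd^r(\F)$. That claim cannot be correct as stated: your space does not depend on $\F$, while $\tcd^r(\F)$ does, and $\tcd^r(\F)$ is defined via complexes on $N \ge n$ vertices and maps to $\R^d$, so any proof must actually deploy the map $f$ somewhere. In your outline $\F$ enters only through the coloring, i.e., on the test-map side; what you have reconstructed is the Dolnikov--K\v{r}\'{\i}\v{z} / $\Z_p$-Tucker argument, which yields the bound with the \emph{combinatorial} defect $\cd^r(\F)$ --- but the entire content of the lemma is the strengthening from $\cd^r$ to $\tcd^r$, and that strengthening is exactly what your coindex assertion silently presupposes.

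There is a second, independent obstruction at your final step. Restricting the deleted join to transversals of $\P$ yields a join of chessboard-type complexes $\Delta_{|P_i|,r}$, and for parts as large as $r-1$ their connectivity is too low for a Dold-type coindex argument to close the gap; this is precisely why the classical \v{Z}ivaljevi\'c--Vre\'cica colored Tverberg theorem needs color classes of size $2r-1$, and why reaching size $r-1$ required the equivariant degree/obstruction computation of \cite{blagojevic2009}. So even with a corrected setup, ``steer off the diagonal and invoke Dold'' fails in the range $|P_i| \le r-1$ that the lemma claims. Relatedly, your diagnosis of where primality is indispensable is off: freeness of the $\Z_r$-action on deleted joins is the cheap part (and for prime powers one could substitute $(\Z_p)^k$-actions, as in the topological Tverberg theorem), whereas the true use of primality is inside the degree computation of the colored Tverberg theorem --- which is why, as Remark~\ref{rem} notes, the lemma for prime powers is open exactly to the extent that the prime-power colored Tverberg theorem is.
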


We briefly recall the definition of the \emph{topological $r$-colorability defect} $\tcd^r(\F)$ of a system $\F$ of subsets
of~$[n]$ from~\cite{frick2017-2}. Let $K$ be a simplicial complex on vertex set $[N]$ with $N \ge n$ whose minimal 
nonfaces are the inclusion-minimal sets of~$\F$. Let $d \ge 0$ be the least integer such that there is a continuous
map $f\colon K \longrightarrow \R^d$ that does not identify $r$ points from $r$ pairwise disjoint faces of~$K$.
Now $\tcd^r(\F)$ is the maximum over all numbers $N-(r-1)(d+1)$ that arise in this way. 
With~\cite[Lemma~4.5]{frick2017-2} we easily see that $\tcd^r(\binom{[n]}{k}) \ge n-r(k-1)$, and thus we immediately 
get the following lemma:

\begin{lemma}
\label{lem:bounds}
	Let $r \ge 2$ be a prime, and let $k \ge 1$ and $n \ge rk$ be integers. Let $\P = \{P_1, \dots, P_\ell\}$ be a partition
	of $[n]$ with~${|P_i| \le r-1}$ for all~${i \in [\ell]}$. Then
	$$\chi(\KG^r(n,k;\P)) = \lc \frac{n-r(k-1)}{r-1} \rc.$$
\end{lemma}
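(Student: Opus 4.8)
The plan is to prove Lemma~\ref{lem:bounds} as a two-sided estimate on $\chi(\KG^r(n,k;\P))$, establishing a lower bound via the machinery of Lemma~\ref{lem:tcd} and a matching upper bound via an explicit coloring. For the lower bound I would take $\F = \binom{[n]}{k}$, so that $\KG^r(\F;\P) = \KG^r(n,k;\P)$ by definition. The inclusion-minimal sets of $\binom{[n]}{k}$ as a family are exactly the $k$-subsets, so the simplicial complex $K$ whose minimal nonfaces are these is the $(k-2)$-skeleton of the simplex on $[N]$. Then Lemma~\ref{lem:tcd} applies (since $r$ is prime and $|P_i|\le r-1$) to give
\begin{equation*}
\chi(\KG^r(n,k;\P)) \ge \lc \frac{\tcd^r(\binom{[n]}{k})}{r-1} \rc \ge \lc \frac{n-r(k-1)}{r-1} \rc,
\end{equation*}
where the second inequality is exactly the stated consequence of \cite[Lemma~4.5]{frick2017-2} quoted in the excerpt, together with the fact that $x \mapsto \lc x/(r-1) \rc$ is nondecreasing.

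For the upper bound I would exhibit a proper coloring of $\KG^r(n,k;\P)$ with exactly $m := \lc \frac{n-r(k-1)}{r-1} \rc$ colors. The natural construction mimics the classical greedy coloring that shows optimality of the Alon--Frankl--Lov\'asz bound for $\KG^r(n,k)$, adapted to the transversal setting. The idea is to color each admissible vertex $\sigma$ (a $k$-subset meeting every $P_i$ in at most one point) according to the smallest element it contains, truncated appropriately: color $\sigma$ by $\min\{\lc \frac{\min(\sigma) - 1}{r-1}\rc + 1, \; m\}$, or a variant thereof in which the first $r(k-1)$ ground-set elements are absorbed into the top color class. The key point is that among any $r$ vertices all assigned the largest color $m$, the relevant elements lie in a window of size roughly $n - (m-1)(r-1)$, which by the choice of $m$ is at most $r(k-1) + (r-1)$; a counting argument then forces two of the $r$ sets to share an element, so they cannot be pairwise disjoint and the $r$-tuple is not a monochromatic hyperedge.

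The main obstacle I anticipate is verifying that this greedy coloring remains valid in the presence of the partition constraint $|\sigma \cap P_i|\le 1$. In the unconstrained Kneser hypergraph $\KG^r(n,k)$ the greedy coloring works because $r$ pairwise disjoint $k$-sets occupy at least a certain number of distinct ground elements; here the vertices are restricted to transversals, which could in principle change the extremal count. I would need to check that the constraint $|P_i|\le r-1$ does not obstruct the counting — intuitively it should only help, since transversals are more spread out — and confirm that the pigeonhole step still produces a repeated element within the top color class. Concretely, the step to watch is bounding below the total number of ground-set elements used by $r$ pairwise disjoint admissible sets whose colors all equal $m$; once that bound exceeds $n$, the desired contradiction follows and the coloring is proper, matching the lower bound and yielding equality.
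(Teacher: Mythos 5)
Your proposal is correct, and its lower bound is precisely the paper's proof: apply Lemma~\ref{lem:tcd} with $\F = \binom{[n]}{k}$ (legitimate since $r$ is prime and $|P_i| \le r-1$) together with $\tcd^r\bigl(\binom{[n]}{k}\bigr) \ge n - r(k-1)$ from \cite[Lemma~4.5]{frick2017-2}. (One nitpick: your identification of $K$ with the $(k-2)$-skeleton of the simplex is only accurate when $N = n$, since the defect is a maximum over complexes on $N \ge n$ vertices; but you never use this description, only the quoted bound, so no harm done.) Where you diverge is the upper bound, and there you make things harder than necessary: the ``main obstacle'' you anticipate is vacuous, because $\KG^r(n,k;\P)$ is an induced subhypergraph of $\KG^r(n,k)$ --- every hyperedge of the former is a hyperedge of the latter --- so any proper coloring of $\KG^r(n,k)$ restricts to a proper coloring of $\KG^r(n,k;\P)$, and deleting vertices can only lower the chromatic number. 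There is thus no counting to recheck under the transversality constraint; the classical greedy coloring of the \emph{full} Kneser hypergraph suffices. Concretely, with $m = \lc \frac{n-r(k-1)}{r-1} \rc$, color $\sigma$ by $\min\bigl\{\lc \min(\sigma)/(r-1) \rc,\, m\bigr\}$: for a color $j < m$, any $r$ sets of color $j$ have minima among $r-1$ values, so two share their minimum; sets of color $m$ lie in a window of $n - (m-1)(r-1) \le rk-1$ elements, too few for $r$ pairwise disjoint $k$-sets. This gives $\chi(\KG^r(n,k)) \le m$, hence $\chi(\KG^r(n,k;\P)) \le m$, matching the lower bound. This is exactly how the paper treats it implicitly: it records in Section~\ref{sec:transition} that a greedy coloring shows optimality for $\KG^r(n,k)$ and then states Lemma~\ref{lem:bounds} as immediate. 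So your argument is sound and essentially the paper's, but you should replace the direct construction on the transversal hypergraph (and the attendant unresolved verification) by the one-line monotonicity observation.
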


\section{Proof of the main result}
\label{sec:proof}

Sarkaria~\cite{sarkaria1990} claimed to establish a lower bound for $\chi(\KG^r_{s-1}(n,k))$ for arbitrary~$r$,
but his induction on the number of prime divisors of~$r$ contained a mistake that cannot be fixed; 
see Lange and Ziegler~\cite{lange2007}. Here we first show that $\chi(\KG^r_{s-1}(n,k))$ is bounded 
from below by the chromatic number of a specially constructed hypergraph, see Lemma~\ref{lem:transversal-red}, 
and then we can induct on prime divisors of~$r$ for this family of hypergraphs in a non-straightforward
way, see Lemma~\ref{lem:prime-ind}.

\begin{lemma}
\label{lem:transversal-red}
	Let $r \ge 2$, $k \ge 1$, and $n \ge k$ be integers and $\s = (s_1, \dots, s_n)$ a vector of positive integers~${s_i \in [r-1]}$.
	Let $N= \sum_i s_i$ and let $\P = \{P_1, \dots, P_n\}$ be a partition of~$[N]$ with~${|P_i| = s_i}$.
	Then $\chi(\KG^r_{\s}(n,k)) \ge \chi(\KG^r(N, k; \P))$.
\end{lemma}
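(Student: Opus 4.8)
The plan is to exhibit a hypergraph homomorphism from $\KG^r(N,k;\P)$ to $\KG^r_\s(n,k)$ and then invoke the standard fact that a homomorphism $H \to G$ of hypergraphs (a vertex map sending each hyperedge of $H$ onto a hyperedge of $G$) forces $\chi(H) \le \chi(G)$. Indeed, any proper coloring of $G$ pulls back along such a map to a proper coloring of $H$: a non-monochromatic hyperedge of $G$ stays non-monochromatic after pulling back, so every hyperedge of $H$ receives at least two colors. Applying this with $H = \KG^r(N,k;\P)$ and $G = \KG^r_\s(n,k)$ yields exactly the inequality $\chi(\KG^r_\s(n,k)) \ge \chi(\KG^r(N,k;\P))$.

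To build the map, note that since $\P = \{P_1, \dots, P_n\}$ partitions $[N]$ into $n$ blocks indexed by $[n]$, there is a natural projection $\pi\colon [N] \to [n]$ sending every element of $P_i$ to $i$. I would define the vertex map by $\sigma \mapsto \pi(\sigma)$. The first (routine) check is that this lands in the correct target: a vertex $\sigma$ of $\KG^r(N,k;\P)$ is a $k$-subset of $[N]$ with $|\sigma \cap P_i| \le 1$ for all $i$, so $\pi$ is injective on $\sigma$ and hence $\pi(\sigma) \in \binom{[n]}{k}$ is a vertex of $\KG^r_\s(n,k)$.

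The heart of the argument is to verify that this map sends hyperedges to hyperedges. Let $\{\sigma_1, \dots, \sigma_r\}$ be a hyperedge of $\KG^r(N,k;\P)$, that is, $r$ pairwise disjoint transversals. I must show that the multiset $\{\{\pi(\sigma_1), \dots, \pi(\sigma_r)\}\}$ is a hyperedge of $\KG^r_\s(n,k)$, i.e., that each $i \in [n]$ lies in at most $s_i$ of the sets $\pi(\sigma_j)$. Observe that $i \in \pi(\sigma_j)$ precisely when $\sigma_j$ meets $P_i$, and in that case $\sigma_j$ meets $P_i$ in a single element by the transversal condition. Because the $\sigma_j$ are pairwise disjoint, distinct indices $j$ that meet $P_i$ use distinct elements of $P_i$; hence the number of such $j$ is at most $|P_i| = s_i$. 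This is exactly the required bound, and it is the one place where both the transversal condition and the pairwise disjointness are essential.

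With the map established as a homomorphism, the chromatic inequality follows at once, proving the lemma. I expect the only genuine subtlety to be the bookkeeping in this hyperedge check: in particular, one must keep track of the fact that the images $\pi(\sigma_j)$ need not be distinct, so the target hyperedge has to be read as a multiset. This causes no difficulty, since $\KG^r_\s(n,k)$ admits hyperedges with multiplicities by definition, and the definition of a proper coloring (two differently colored vertices in each hyperedge) pulls back unchanged.
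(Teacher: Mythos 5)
Your proposal is correct and matches the paper's proof in essence: you use the same projection $\pi$ (the paper's $f$) and the same pigeonhole count that pairwise disjoint transversals meet each block $P_i$ at most $|P_i| = s_i$ times, merely packaging the coloring pullback as a hypergraph homomorphism where the paper verifies it directly by contraposition. Your explicit attention to the images $\pi(\sigma_j)$ possibly coinciding, and hence to reading the target hyperedge as a multiset, is exactly the right subtlety to flag.
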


\begin{proof}
	Given an $m$-coloring $c \colon \binom{[n]}{k} \longrightarrow [m]$ of the hypergraph $\KG^r_{\s}(n,k)$
	we will define an $m$-coloring $c'$ of~$\KG^r(N, k; \P)$. Let $f\colon [N] \longrightarrow [n]$
	be the map that projects every element of $P_i$ to~$i$. For a $k$-element subset $A \subset [N]$
	with $|A \cap P_i| \le 1$ for all~$i$ define $c'(A) = c(f(A))$.
	
	We claim that $c'$ defines an $m$-coloring: Let $A_1, \dots, A_r \subset [N]$ be sets with $c'(A_1)=\dots = c'(A_r)$.
	Since $c$ is an $m$-coloring, the vertices $f(A_1), \dots, f(A_r)$ do not form a hyperedge 
	of~$\KG^r_{\s}(n,k)$. Thus there is an element $\ell \in [n]$ and pairwise distinct 
	$i_1, \dots, i_{s+1}$ with ${\ell \in f(A_{i_1}) \cap \dots \cap f(A_{i_{s+1}})}$, where $s = s_\ell$.
	Then $P_\ell \cap A_{i_j} \ne \emptyset$ for each~${j \in [s+1]}$. 
	Since $P_\ell$ only has $s$ elements, the sets $A_{i_1}, \dots, A_{i_{s+1}}$ cannot be pairwise disjoint.
	Hence the sets $A_1, \dots, A_r$ do not form a hyperedge of~$\KG^r(N, k; \P)$ and $c'$ is an $m$-coloring.
\end{proof}

The following reduction to prime divisors is similar to the one given by Alon, Drewnoswki, and \L uczak~\cite[Lemma~3.3]{alon2009}.

\begin{lemma}
\label{lem:prime-ind}
	Let $r_1, r_2 \ge 2$ and $b_1, b_2 \ge 1$ be integers. Suppose for all integers ${k_1 \ge 1}$, $n_1\ge r_1k_1$, 
	and partitions $\P=\{P_1, \dots, P_l\}$ of $[n_1]$ with $|P_i| \le b_1$ for all~$i$, we have 
	$\chi(\KG^{r_1}(n_1, k_1; \P)) \ge \lc \frac{n_1 - r_1(k_1-1)}{r_1-1} \rc$, 
	and suppose for all integers $k_2 \ge 1$, $n_2\ge r_2k_2$, and partitions $\mathcal Q = \{Q_1, \dots, Q_m\}$ 
	of $[n_2]$ with $|Q_j| \le b_2$ for all~$j$, we have 
	$\chi(\KG^{r_2}(n_2, k_2 ;\mathcal Q)) \ge \lc \frac{n_2 - r_2(k_2-1)}{r_2-1}\rc$. 
	Then for all integers $k \ge 1$, $n\ge r_1r_2k$, and partitions $\mathcal R = \{R_1, \dots, R_t\}$ of $[n]$ 
	with $|R_i| \leq b_1b_2$ for all~$i$, we have
	$\chi(\KG^{r_1r_2}(n,k;\mathcal R)) \ge \lc \frac{n - r_1r_2(k-1)}{r_1r_2-1}\rc$.
\end{lemma}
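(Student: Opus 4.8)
The plan is to prove the equivalent ``ground-set budget'' inequality: if $\KG^{r_1r_2}(n,k;\mathcal R)$ admits a proper coloring with $m$ colors, then $n \le (r_1r_2-1)m + r_1r_2(k-1)$; since $m$ is an integer this is exactly the asserted bound $m \ge \lceil \frac{n-r_1r_2(k-1)}{r_1r_2-1}\rceil$. The crucial observation is the algebraic identity $(r_1r_2-1)m + r_1r_2(k-1) = (r_1-1)m + r_1(k_1-1)$, where $k_1 := (r_2-1)m + r_2(k-1)+1$. This suggests applying the $r_1$-hypothesis a single time to an auxiliary hypergraph $\KG^{r_1}(n,k_1;\P_1)$ on the same ground set $[n]$ but with the much larger uniformity $k_1$, whose coloring is induced from the given one, and using the $r_2$-hypothesis only to certify that this induced coloring is proper. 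The non-straightforward point --- and the reason the denominator comes out as $r_1r_2-1$ rather than $(r_1-1)(r_2-1)$ --- is precisely that the two hypotheses are \emph{not} composed multiplicatively: $r_2$ is absorbed into the uniformity $k_1$, not into a second factor of a chromatic bound.

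First I would set up the partitions. Splitting each block $R_i$ (of size at most $b_1b_2$) into at most $b_2$ sub-blocks of size at most $b_1$ yields a refinement $\P_1$ of $\mathcal R$ with all parts of size at most $b_1$, so the $r_1$-hypothesis applies to $\KG^{r_1}(n,k_1;\P_1)$. For a vertex $B$ of this auxiliary hypergraph --- a $k_1$-set meeting each part of $\P_1$ at most once --- the induced partition $\mathcal R_B = \{\,B\cap R_i : B\cap R_i \ne \emptyset\,\}$ has all parts of size at most $b_2$, and a $k$-subset $A\subseteq B$ is $\mathcal R$-transversal if and only if it is $\mathcal R_B$-transversal. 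Thus the vertices of $\KG^{r_2}(B,k;\mathcal R_B)$ are exactly the $\mathcal R$-transversal $k$-sets inside $B$, all of which are colored by the given coloring $c$.

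Next I would define the auxiliary coloring. Because $|B| = k_1 > (r_2-1)m + r_2(k-1)$, the $r_2$-hypothesis shows that every proper coloring of $\KG^{r_2}(B,k;\mathcal R_B)$ uses more than $m$ colors; hence the restriction of $c$ to the $\mathcal R$-transversal $k$-subsets of $B$ is not proper, producing $r_2$ pairwise disjoint such sets sharing a common color, which I take to be $c_1(B)$. To check that $c_1$ is proper on $\KG^{r_1}(n,k_1;\P_1)$, suppose $B_1,\dots,B_{r_1}$ are pairwise disjoint vertices with a common value $c_1(B_i)=\gamma$. Each $B_i$ supplies $r_2$ pairwise disjoint $\mathcal R$-transversal $k$-sets of $c$-color $\gamma$; since the $B_i$ are pairwise disjoint, these $r_1r_2$ sets are pairwise disjoint, forming a monochromatic hyperedge of $\KG^{r_1r_2}(n,k;\mathcal R)$ and contradicting properness of $c$. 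Hence $c_1$ is a proper coloring with at most $m$ colors, and the $r_1$-hypothesis (valid once $n \ge r_1k_1$) gives $m \ge \lceil \frac{n-r_1(k_1-1)}{r_1-1}\rceil$, which rearranges via the identity above to the desired bound. The remaining regime $n < r_1k_1$ is immediate, since then $n \le r_1k_1 - 1 = r_1(r_2-1)m + r_1r_2(k-1) + (r_1-1) \le (r_1r_2-1)m + r_1r_2(k-1)$ using $m\ge 1$.

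The main obstacle I anticipate is the bookkeeping that makes both hypotheses applicable simultaneously: one must refine the size-$b_1b_2$ blocks so that the outer partition $\P_1$ has parts of size at most $b_1$ while the partition $\mathcal R_B$ induced on each vertex automatically has parts of size at most $b_2$. Getting this factorization of the block sizes to align with the two hypotheses, together with verifying $|B| = k_1 \ge r_2k$ so that the $r_2$-hypothesis may legitimately be invoked, is the technical heart of the argument; once the uniformity $k_1$ is chosen correctly, everything else is forced.
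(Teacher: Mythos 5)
Your proposal is correct and follows essentially the same route as the paper's proof: your $k_1=(r_2-1)m+r_2(k-1)+1$ is exactly the paper's $K_1$, the refinement of each $R_i$ into at most $b_2$ sub-blocks of size at most $b_1$ is identical, and the two-level application (inner $r_2$-hypothesis on each transversal $k_1$-set to extract a monochromatic disjoint $r_2$-tuple, outer $r_1$-hypothesis on the auxiliary coloring to assemble $r_1r_2$ disjoint monochromatic $k$-sets) matches the paper's argument step for step. The only differences are presentational --- you derive the inequality $n \le (r_1r_2-1)m + r_1r_2(k-1)$ directly rather than arguing by contradiction, and you explicitly dispose of the regime $n < r_1k_1$ via $m \ge 1$, a boundary check the paper leaves implicit.
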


\begin{proof}
	Assume $r_1 \le r_2$. Suppose there exists an $L$-coloring $c$ of $\KG^{r_1r_2}(n,k; \mathcal R)$ where 
	$L< \lc \frac{n - r_1r_2(k-1)}{r_1r_2-1}\rc$. Note that $(r_1r_2-1)L \le n-r_1r_2(k-1)-1$. For $K_1 = L(r_2-1)+r_2(k-1)+1$ 
	we now have
	$$r_1(K_1-1) + L(r_1-1) = r_1L(r_2-1)+r_1r_2(k-1) + L(r_1-1) = r_1r_2(k-1)+L(r_1r_2-1) \le n-1.$$

	For all $A \subset [n]$ with $|A| = K_1$ such that $|R_i \cap A| \le b_2$, our assumption says that 
	for the partition $\mathcal R' = \{R_1 \cap A, \dots, R_t \cap A\}$ of $A$ we have
	$\chi(\KG^{r_2}(\binom{A}{k};\mathcal R')) \ge \lc \frac{K_1 - r_2(k-1)}{r_2-1}\rc \ge L+1$, so any $L$-coloring 
	of it yields a monochromatic hyperedge. In particular, the $L$-coloring of $\KG^{r_2}(\binom{A}{k};\mathcal R')$ 
	induced by $c$ yields a monochromatic pairwise disjoint $r_2$-tuple of $k$-sets in~$A$.

	Now partition each $R_i$ into $R_{i,1}, \dots, R_{i,\alpha_i}$ with $\alpha_i \le b_2$ and $|R_{i,j}| \le b_1$. 
	Each vertex $A$ of the hypergraph $\KG^{r_1}(n, K_1;R_{1,1}, \dots, R_{1,{\alpha_1}}, \dots, R_{t,1}, \dots, R_{t,{\alpha_t}})$ 
	contains at most $\alpha_i \le b_2$ elements from each~$R_i$. Then by the previous paragraph, each vertex 
	$A$ contains a monochromatic pairwise disjoint $r_2$-tuple, so we can give an $L$-coloring $c'$ of this hypergraph 
	by assigning each vertex $A$ the color of one of the monochromatic pairwise disjoint $r_2$-tuples it contains. 
	Then by assumption we have 
	$$\chi(\KG^{r_1}(n, K_1;R_{1,1}, \dots, R_{1,{\alpha_1}}, \dots, R_{t,1}, \dots, R_{t,{\alpha_t}})) \ge \lc \frac{n - r_1(K_1-1)}{r_1-1}\rc \ge L+1,$$ 
	so this $L$-coloring of $\KG^{r_1}(n,K_1;R_{1,1}, \dots, R_{1,{\alpha_1}}, \dots, R_{t,1}, \dots, R_{t,{\alpha_t}})$ 
	yields a monochromatic hyperedge $(A_1, \dots, A_{r_1})$. Since for each $i \in [r_1]$ the set $A_i$ contains a monochromatic 
	pairwise disjoint $r_2$-tuple of $k$-sets of the same color, together we have $r_1r_2$ pairwise disjoint $k$-sets of the 
	same color in the original hypergraph $\KG^{r_1r_2}(n,k;R_1, \dots, R_t)$. 
	This contradicts that $c$ is a proper coloring.
\end{proof}

As a last ingredient before proving our main result we need one additional reduction for $r$ a power of two. 
For an integer $s \ge 1$ a set $\sigma \subset [n]$ is called \emph{$s$-stable} if any two of its elements are at 
distance at least~$s$ in the cyclic ordering of~$[n]$. Denote by $\KG^r(n,k)_{s-\textrm{stab}}$ the subhypergraph
of~$\KG^r(n,k)$ induced by those vertices that correspond to $s$-stable sets. Ziegler conjectured:

\begin{conjecture}[Ziegler~\cite{ziegler2002}]
\label{conj:stab}
	For integers $r \ge 2$, $k \ge 1$, and $n \ge rk$ we have
	$$\chi(\KG^r(n,k)_{r-\textrm{stab}}) = \lc \frac{n-r(k-1)}{r-1} \rc.$$
\end{conjecture}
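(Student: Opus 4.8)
The plan is to split off the easy direction and concentrate on the lower bound. Since $\KG^r(n,k)_{r-\textrm{stab}}$ is the subhypergraph of $\KG^r(n,k)$ induced on the $r$-stable vertices, any proper coloring of $\KG^r(n,k)$ restricts to one of $\KG^r(n,k)_{r-\textrm{stab}}$, so Alon, Frankl, and Lov\'asz~\cite{alon1986} give $\chi(\KG^r(n,k)_{r-\textrm{stab}}) \le \lc \frac{n-r(k-1)}{r-1}\rc$ for free. Everything thus reduces to the matching lower bound: passing to $r$-stable sets must not decrease the chromatic number.

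My first move would be to write $\KG^r(n,k)_{r-\textrm{stab}} = \KG^r(\F;\P)$, where $\F$ is the family of $r$-stable $k$-subsets and $\P$ is the partition of $[n]$ into singletons (so $|P_i| = 1 \le r-1$). For prime $r$, Lemma~\ref{lem:tcd} then yields $\chi \ge \lc \tcd^r(\F)/(r-1) \rc$, and it would suffice to prove $\tcd^r(\F) \ge n-r(k-1)$. The hard part is already visible here: the complex attached to $\F$ (whose faces are the sets containing no $r$-stable $k$-subset) strictly contains the complex $\binom{[n]}{\le k-1}$ attached to the full family $\binom{[n]}{k}$. Enlarging the complex only adds tuples of pairwise disjoint faces that a collision-free map must keep apart, so $\tcd^r(\F) \le \tcd^r(\binom{[n]}{k})$, and the generic estimate $\tcd^r(\binom{[n]}{k}) \ge n-r(k-1)$ points the wrong way. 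Hence Lemma~\ref{lem:tcd} alone is too weak: one cannot prove the bound using only the \emph{size} of $\F$; the \emph{cyclic} nature of stability must enter.

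So the actual route, for prime $r$, would be a $\Z_r$-equivariant Tucker-type argument tailored to the cyclic geometry, generalizing the octahedral-Tucker proof of the $r=2$ (Schrijver) case. Concretely: build a $\Z_r$-space parametrizing ordered $r$-tuples of disjoint candidate sets, design a labeling/test map that a $\Z_r$-Borsuk--Ulam theorem forces to collapse unless many colors appear, and use $r$-stability to constrain the labeling so that the resulting monochromatic $r$-tuple consists of honestly $r$-stable, pairwise disjoint $k$-sets. The technical crux is to show that the relevant configuration space keeps the same $\Z_r$-index (equivalently, connectivity) after the restriction to stable sets.

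For $r = 2^{\alpha}$ --- the case the paper actually needs --- I would iterate the $r=2$ argument through the elementary abelian group $\Z_2^{\alpha}$, whose cohomological Borsuk--Ulam theory is as strong as for a single $\Z_2$; this is where the ``reduction for $r$ a power of two'' sits, and it is robust precisely because $2^{\alpha}$ is a prime power. I expect the genuine obstacle, and the reason Conjecture~\ref{conj:stab} is open in general, to be group-theoretic: when $r$ has two distinct prime divisors there is no Borsuk--Ulam theorem for $\Z_r$, so the direct equivariant argument fails; moreover $r$-stability is a cyclic condition on individual vertices and does not factor through a decomposition $r = r_1 r_2$, so the prime-power induction of Lemma~\ref{lem:prime-ind}, which works so cleanly for the transversal hypergraphs $\KG^r(n,k;\P)$, cannot be transported to the stable setting.
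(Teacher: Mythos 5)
You are attempting to prove a statement that the paper records as an \emph{open conjecture}, not a theorem: the paper supplies no proof of Conjecture~\ref{conj:stab}. It only recalls that the case $r=2$ is Schrijver's theorem~\cite{schrijver1978}, that by Alon, Drewnowski, and \L uczak~\cite{alon2009} validity for $r=p$ and for $r=q$ implies validity for the product $pq$, and hence that the conjecture holds when $r$ is a power of two --- which is all the paper uses (Lemma~\ref{lem:2^t}); otherwise it is open. Within that framing, parts of your proposal are sound: the upper bound does follow by restricting a proper coloring of $\KG^r(n,k)$, and your diagnosis that Lemma~\ref{lem:tcd} with the singleton partition is not enough ``for free'' is correct --- one would need $\tcd^r$ of the family of $r$-stable $k$-sets to be at least $n-r(k-1)$, and since the associated complex is larger than the one for $\binom{[n]}{k}$ this does not follow from the full-family estimate. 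Indeed, the paper's Theorem~\ref{thm:t-wide} shows how specially chosen point configurations (Bukh--Loh--Nivasch) can recover such $\tcd^r$ bounds for the weaker $t$-wide restriction, while full $r$-stability remains out of reach.

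However, your account of the power-of-two case --- the only case relevant to the paper --- misstates a key fact and substitutes a nonexistent argument. You claim that $r$-stability ``does not factor through a decomposition $r = r_1 r_2$'' and that the product reduction ``cannot be transported to the stable setting.'' This is exactly backwards: the result of Alon, Drewnowski, and \L uczak cited in the paper (their Lemma~3.3, the model for Lemma~\ref{lem:prime-ind}) \emph{is} such a transport --- a product reduction for $r$-stable Kneser hypergraphs showing that if Conjecture~\ref{conj:stab} holds for $r=p$ and $r=q$ then it holds for $pq$. Combined with Schrijver's theorem, this combinatorial reduction is precisely how the case $r = 2^{\alpha}$ is established; no equivariant-topological argument is needed or known there. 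Your alternative --- iterating the octahedral-Tucker argument through $\Z_2^{\alpha}$ --- is unsubstantiated: you construct neither the configuration space nor the labeling, and you do not explain how $2^{\alpha}$-stability, a condition on cyclic gaps of size $2^{\alpha}$ rather than a product of $\Z_2$-conditions, would interact with an elementary-abelian symmetry. So while you correctly identify the conjecture as open for general $r$ (and plausibly locate the topological obstruction for $r$ with two distinct prime divisors), you misattribute the obstruction in the stable setting and discard the one ingredient that actually works.
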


The $r=2$ case of this conjecture recovers a classical result of Schrijver~\cite{schrijver1978}. 
Alon, Drewnowski, and \L uczak~\cite{alon2009} showed that if Conjecture~\ref{conj:stab} holds for
$r = p$ and for $r = q$ then it also holds for their product~$pq$. Thus in light of Schrijver's result
Conjecture~\ref{conj:stab} holds for $r$ a power of two. Otherwise it is open; for some recent progress
see~\cite{frick2017-2}. 

If $\P = \{P_1, \dots, P_\ell\}$ is a partition of~$[n]$ with $|P_i| \le r$ for all~${i \in [\ell]}$, then
the hypergraph $\KG^r(n,k)_{r-\textrm{stab}}$ is a subhypergraph of $\KG^r(n, k; \P)$ after possibly
reordering~$[n]$ to make each part $P_i$ consist of consecutive elements. Together
with the preceding paragraph this implies:

\begin{lemma}
\label{lem:2^t}
	Let $r = 2^t$ be a power of two, and let $k \ge 1$ and $n \ge rk$ be integers.
	Let $\P = \{P_1, \dots, P_\ell\}$ be a partition of $[n]$ with~${|P_i| \le r}$ for all~${i \in [\ell]}$. Then
	$$\chi(\KG^r(n, k; \P)) = \lc \frac{n-r(k-1)}{r-1} \rc.$$
\end{lemma}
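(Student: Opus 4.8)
The plan is to prove the two inequalities separately. The upper bound $\chi(\KG^r(n,k;\P)) \le \lc \frac{n-r(k-1)}{r-1} \rc$ is elementary and does not use that $r$ is a power of two, while the lower bound is the substantive direction and is engineered to follow from the fact, recalled in the paragraph preceding the statement, that Ziegler's Conjecture~\ref{conj:stab} holds for $r = 2^t$.

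For the upper bound I would observe that $\KG^r(n,k;\P)$ is exactly the subhypergraph of $\KG^r(n,k)$ induced on those vertices $\sigma \in \binom{[n]}{k}$ that are transversals of $\P$, i.e. satisfy $|\sigma \cap P_i| \le 1$ for all $i$: every hyperedge of $\KG^r(n,k;\P)$ is an $r$-tuple of pairwise disjoint $k$-sets and hence also a hyperedge of $\KG^r(n,k)$. Since passing to a subhypergraph cannot increase the chromatic number, $\chi(\KG^r(n,k;\P)) \le \chi(\KG^r(n,k))$, and the greedy coloring witnessing optimality of the Alon--Frankl--Lov\'asz bound gives $\chi(\KG^r(n,k)) = \lc \frac{n-r(k-1)}{r-1} \rc$.

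For the lower bound I would first relabel $[n]$ so that each part $P_i$ consists of its $|P_i| \le r$ elements placed consecutively in the cyclic ordering; this is harmless, as a bijection of the ground set induces an isomorphism of $\KG^r(n,k;\P)$ and hence preserves the chromatic number. With the parts now consecutive, the key combinatorial check is that every $r$-stable set $\sigma$ is automatically a transversal of $\P$: any two elements of $\sigma$ are at cyclic distance at least $r$, whereas two elements lying in a single block $P_i$ of $\le r$ consecutive positions are at cyclic distance at most $|P_i| - 1 \le r-1$, so $|\sigma \cap P_i| \le 1$ for every $i$. Pairwise disjoint $r$-stable sets are therefore pairwise disjoint transversals, so $\KG^r(n,k)_{r-\textrm{stab}}$ is a subhypergraph of $\KG^r(n,k;\P)$ and
$$\chi(\KG^r(n,k;\P)) \ge \chi(\KG^r(n,k)_{r-\textrm{stab}}) = \lc \frac{n-r(k-1)}{r-1} \rc,$$
the last equality being precisely Conjecture~\ref{conj:stab} in the case $r = 2^t$.

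The only genuinely nontrivial input is this last equality, which however is not new: it follows by combining Schrijver's theorem (the case $r = 2$) with the multiplicativity result of Alon, Drewnowski, and \L uczak, exactly as assembled in the paragraph before the statement. The remaining ingredients---the two subhypergraph comparisons and the observation that $r$-stable sets become transversals after reordering---are elementary, so I do not anticipate any serious obstacle beyond carefully carrying out the cyclic-distance bookkeeping.
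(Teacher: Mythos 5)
Your proposal is correct and follows essentially the same route as the paper: the paper obtains the lower bound by observing (in the paragraph preceding the lemma) that after reordering $[n]$ so each $P_i$ is consecutive, $\KG^r(n,k)_{r-\textrm{stab}}$ is a subhypergraph of $\KG^r(n,k;\P)$, and then invokes Conjecture~\ref{conj:stab} for $r=2^t$ via Schrijver and Alon--Drewnowski--{\L}uczak, exactly as you do. You merely make explicit the cyclic-distance check and the matching upper bound via the greedy coloring of $\KG^r(n,k)$, both of which the paper leaves implicit.
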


We conjecture that $r$ being a power of two is a superfluous condition in the preceding lemma:

\begin{conjecture}
\label{conj:part}
	Let $r \ge 2$, $k \ge 1$, and $n \ge rk$ be integers.
	Let $\P = \{P_1, \dots, P_\ell\}$ be a partition of $[n]$ with~${|P_i| \le r}$ for all~${i \in [\ell]}$. Then
	$$\chi(\KG^r(n, k; \P)) = \lc \frac{n-r(k-1)}{r-1} \rc.$$
\end{conjecture}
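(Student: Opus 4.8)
The plan is to separate the claimed equality into its two inequalities, which have completely different characters. The upper bound is immediate: the vertices of $\KG^r(n,k;\P)$ form a subset of $\binom{[n]}{k}$, and its hyperedges are exactly those hyperedges of $\KG^r(n,k)$ all of whose vertices are transversals of $\P$, so $\KG^r(n,k;\P)$ is an induced subhypergraph of $\KG^r(n,k)$. Restricting the greedy coloring that witnesses $\chi(\KG^r(n,k)) = \lc \frac{n-r(k-1)}{r-1} \rc$ to the transversal vertices gives $\chi(\KG^r(n,k;\P)) \le \lc \frac{n-r(k-1)}{r-1} \rc$. All the content lies in the matching lower bound.

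For the lower bound I would reduce to prime factors exactly as in the proof of Theorem~\ref{thm:main}, but now aiming for the larger permitted part size. Write $r = 2^{\alpha_0} p_1^{\alpha_1} \cdots p_t^{\alpha_t}$. By iterating the prime-induction Lemma~\ref{lem:prime-ind}, it suffices to prove, for each prime $p$ dividing $r$, the bound $\chi(\KG^p(m,j;\mathcal Q)) \ge \lc \frac{m-p(j-1)}{p-1} \rc$ for all $m \ge pj$ and all partitions $\mathcal Q$ with parts of size at most $p$; feeding these statements into Lemma~\ref{lem:prime-ind} with $b = p$ at each prime produces the statement for $r$ with bound exactly $b = r$, which is Conjecture~\ref{conj:part}. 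For $p = 2$ this prime case is Lemma~\ref{lem:2^t} with $t=1$ (equivalently, Schrijver's theorem). Thus the whole conjecture comes down to a single family: an odd prime $p$ with partition parts of size up to $p$.

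To attack that odd-prime case I would invoke the subhypergraph relation established just before Lemma~\ref{lem:2^t}. After reordering $[m]$ so that each part of $\mathcal Q$ consists of consecutive elements, every $p$-stable $j$-subset meets each part (a block of at most $p$ consecutive elements) in at most one point, so $\KG^p(m,j)_{p\textrm{-stab}}$ is a subhypergraph of $\KG^p(m,j;\mathcal Q)$; as $\chi$ does not depend on the labeling of the ground set, this gives $\chi(\KG^p(m,j;\mathcal Q)) \ge \chi(\KG^p(m,j)_{p\textrm{-stab}})$. Hence Ziegler's Conjecture~\ref{conj:stab} for $r = p$ would finish the proof. In short, Conjecture~\ref{conj:part} follows from Ziegler's conjecture, and the only gap is at the odd primes.

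The main obstacle is precisely this odd-prime case, and it is a genuinely hard open problem rather than a routine extension. The topological input available to us, Lemma~\ref{lem:tcd}, requires the part sizes to satisfy $|P_i| \le p-1$: the $\Z_p$-equivariant test-map argument behind the colorability-defect bound $\tcd^p$ is exactly at its boundary when a part attains the full size $p$, the case where the relevant free action fails to force the coincidence of $p$ points from pairwise disjoint faces. This is the same phenomenon, now over $\Z_p$ rather than $\Z_2$, that separates Lov\'asz's Kneser theorem from Schrijver's stable refinement; passing from $p-1$ to $p$ (equivalently, from the full transversal family to the $p$-stable family) demands a sharpened $\Z_p$-Tucker-type obstruction that does not follow from Lemma~\ref{lem:tcd}. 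I would therefore expect any complete proof either to supply such a refined equivariant lemma admitting parts of size $p$, or to establish Ziegler's Conjecture~\ref{conj:stab} for odd primes directly; both routes go beyond the techniques that suffice for Theorem~\ref{thm:main}.
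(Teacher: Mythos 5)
You have correctly recognized that this statement is Conjecture~\ref{conj:part}, which the paper itself leaves open: there is no proof in the paper to compare against, only partial results and reductions, and your proposal honestly stops at exactly the point where the paper does. Your reductions coincide with the paper's own discussion: the upper bound is inherited by restricting the greedy coloring of $\KG^r(n,k)$ to the transversal vertices; Lemma~\ref{lem:prime-ind} reduces the lower bound to the prime case (and, since admissible part sizes multiply as $b_1b_2$, handling each prime $p$ with parts of size up to $p$ composes to parts of size up to $r$, as you note); the case $p=2$ is Lemma~\ref{lem:2^t} via Schrijver's theorem; and the odd-prime case would follow from Ziegler's Conjecture~\ref{conj:stab} through the stable-subhypergraph containment stated just before Lemma~\ref{lem:2^t} --- all of which the paper records in the two paragraphs surrounding Conjecture~\ref{conj:part}. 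Your diagnosis of the obstruction is also accurate and matches the paper's: Lemma~\ref{lem:tcd} requires $|P_i| \le p-1$, which is why Theorem~\ref{thm:parts} only reaches part sizes $b(r)$ rather than $r$, and even the prime-power extension of the colored Tverberg theorem contemplated in Remark~\ref{rem} would yield parts of size $r-1$, still short of $r$. In short: your proposal is a correct account of the state of the problem and takes the same route the paper sketches, but neither you nor the paper proves the statement --- the odd-prime case with parts of full size $p$ is genuinely open.
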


Conjecture~\ref{conj:part} is implied by Conjecture~\ref{conj:stab}. Lemma~\ref{lem:prime-ind} shows
that it suffices to establish Conjecture~\ref{conj:part} for $r$ a prime. 
Combining the lemmas above shows that Conjecture~\ref{conj:part} holds in many cases; see Theorem~\ref{thm:parts}
below. Recall that for an integer $r \ge 2$ with prime factorization $2^{\alpha_0}\cdot p_1^{\alpha_1} \cdots p_t^{\alpha_t}$
the quantity $b(r)$ denotes $2^{\alpha_0}\cdot (p_1-1)^{\alpha_1} \cdots (p_t-1)^{\alpha_t}$.

\begin{theorem}
\label{thm:parts}
	Let $r \ge 2$ be an integer.
	Then Conjecture~\ref{conj:part} holds for $r$ and any partition $\P = \{P_1, \dots, P_\ell\}$ of $[n]$ 
	with~${|P_i| \le b(r)}$ for all~${i \in [\ell]}$.
\end{theorem}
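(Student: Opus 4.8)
The plan is to prove Theorem~\ref{thm:parts} by induction on the number of prime factors of $r$ (counted with multiplicity), using Lemma~\ref{lem:prime-ind} as the inductive step and establishing the appropriate base cases. Let me think about what structure the induction needs.

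We want to show $\chi(\KG^r(n,k;\P)) = \lceil \frac{n-r(k-1)}{r-1}\rceil$ whenever $|P_i| \le b(r)$. The upper bound (the coloring achieving this value) comes from a greedy coloring and holds generally, so the content is the lower bound. The key observation is that $b(r)$ is multiplicative: if $r = r_1 r_2$ then $b(r) = b(r_1) b(r_2)$. This is exactly what makes Lemma~\ref{lem:prime-ind} applicable: if the lower bound holds for $r_1$ with parts of size $\le b(r_1) =: b_1$ and for $r_2$ with parts of size $\le b(r_2) =: b_2$, then it holds for $r_1 r_2$ with parts of size $\le b_1 b_2 = b(r_1 r_2)$.

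So I would factor $r = 2^{\alpha_0} p_1^{\alpha_1}\cdots p_t^{\alpha_t}$ and build $r$ up as a product of these prime-power factors. The base cases are therefore of two types. First, for $r = p$ an odd prime, we have $b(p) = p-1$, and the lower bound $\chi(\KG^p(n,k;\P)) \ge \lceil \frac{n-p(k-1)}{p-1}\rceil$ for partitions into parts of size $\le p-1$ is exactly Lemma~\ref{lem:bounds}. Second, for $r = 2^{\alpha_0}$ a power of two, we have $b(2^{\alpha_0}) = 2^{\alpha_0}$, and the lower bound for partitions into parts of size $\le 2^{\alpha_0}$ is exactly Lemma~\ref{lem:2^t}. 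These two families of base cases cover every prime-power factor appearing in the factorization of $r$: the odd prime powers $p_i^{\alpha_i}$ are handled by repeatedly applying Lemma~\ref{lem:prime-ind} with $r_1 = p_i$ (using Lemma~\ref{lem:bounds}), and the power-of-two part $2^{\alpha_0}$ is handled in one shot by Lemma~\ref{lem:2^t}.

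With these ingredients the argument is essentially bookkeeping: I would phrase Lemma~\ref{lem:prime-ind} as saying that the property ``the sharp lower bound holds for $r$ whenever parts have size $\le b$'' is closed under products of the pair $(r,b)$, show it holds for each prime-power building block with $b = b(\text{block})$, and conclude by induction that it holds for $r$ with $b = b(r)$. The main subtlety to verify is that Lemma~\ref{lem:prime-ind} genuinely chains together, namely that at each stage the hypotheses are literally the conclusions of the previous stage --- in particular that the bound is required to hold for \emph{all} admissible $k$ and $n \ge rk$ simultaneously, which both Lemma~\ref{lem:bounds} and Lemma~\ref{lem:2^t} indeed provide. The only real obstacle I anticipate is a careful check that the multiplicativity $b(r_1 r_2) = b(r_1)b(r_2)$ aligns exactly with the part-size bound $b_1 b_2$ in Lemma~\ref{lem:prime-ind}, and that the dimension/size constraints $n \ge r_1 r_2 k$ propagate correctly through the induction; these are routine but must be stated precisely to make the induction airtight.
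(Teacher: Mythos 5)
Your proposal is correct and takes essentially the same approach as the paper: its proof of Theorem~\ref{thm:parts} likewise handles the power-of-two factor via Lemma~\ref{lem:2^t} and odd primes via Lemma~\ref{lem:bounds}, then chains these base cases together by induction using Lemma~\ref{lem:prime-ind}, with the multiplicativity $b(r_1r_2)=b(r_1)b(r_2)$ doing exactly the bookkeeping you describe. Your added checks (upper bound from the greedy coloring on the ambient Kneser hypergraph, and that the hypotheses of Lemma~\ref{lem:prime-ind} quantify over all $k$ and $n\ge rk$ so the conclusions feed back in as hypotheses) are sound and merely make explicit what the paper leaves implicit.
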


\begin{proof}
	If $r$ is a power of two, this follows from Lemma~\ref{lem:2^t}.
	If $r$ is an odd prime this is guaranteed by Lemma~\ref{lem:bounds}.
	For general $r$ this follows by induction using Lemma~\ref{lem:prime-ind}.
\end{proof}

\begin{theorem}
\label{thm:main2}
	Let $r \ge 2$ be an integer.
	Let $k \ge 1$ and $n \ge 1$ be integers and $\s = (s_1, \dots, s_n)$ a vector of positive integers~${s_i \in [b(r)-1]}$.
	Then $\chi(\KG^r_{\s}(n,k)) \ge \lc \frac{(\sum_i s_i)-r(k-1)}{r-1} \rc$.
	In particular, $\chi(\KG^r_{s-1}(n,k)) \ge \lc \frac{(s-1)n-r(k-1)}{r-1} \rc$.
\end{theorem}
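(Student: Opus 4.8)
The plan is to combine the transversal reduction of Lemma~\ref{lem:transversal-red} with the partition bound of Theorem~\ref{thm:parts}; almost all of the difficulty has already been absorbed into those two statements, so the remaining task is to confirm that their hypotheses line up and to dispose of the degenerate small cases. The first observation I would record is that the hypothesis $s_i \in [b(r)-1]$ forces $1 \le s_i \le b(r)-1 \le r-1$, because $b(r) = 2^{\alpha_0}(p_1-1)^{\alpha_1}\cdots(p_t-1)^{\alpha_t} \le 2^{\alpha_0}p_1^{\alpha_1}\cdots p_t^{\alpha_t} = r$. Hence $\s$ is a legitimate input for Lemma~\ref{lem:transversal-red}. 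Setting $N = \sum_i s_i$ and fixing any partition $\P = \{P_1, \dots, P_n\}$ of $[N]$ with $|P_i| = s_i$, that lemma gives $\chi(\KG^r_\s(n,k)) \ge \chi(\KG^r(N,k;\P))$.

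Assuming $N \ge rk$, I would then apply Theorem~\ref{thm:parts} to this partition. Each part satisfies $|P_i| = s_i \le b(r)-1 \le b(r)$, so the part-size constraint is met, and the theorem yields $\chi(\KG^r(N,k;\P)) = \lc \frac{N - r(k-1)}{r-1}\rc = \lc \frac{(\sum_i s_i) - r(k-1)}{r-1}\rc$. Chaining this equality with the preceding inequality produces exactly the claimed lower bound.

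Next I would treat the range $N < rk$ separately, since Theorem~\ref{thm:parts} presumes $N \ge rk$. When $\sum_i s_i \le rk-1$ one has $(\sum_i s_i) - r(k-1) \le r-1$, so the right-hand ceiling is at most $1$; the asserted bound then holds trivially, since $\chi(\KG^r_\s(n,k)) \ge 1$ whenever the vertex set $\binom{[n]}{k}$ is nonempty, and when $n < k$ the ceiling is in fact nonpositive. Finally, the ``in particular'' statement follows by specializing $s_i = s-1$ for all $i$: the requirement $s-1 \in [b(r)-1]$ is precisely $2 \le s \le b(r)$, and in that case $\sum_i s_i = (s-1)n$.

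The main obstacle here is not conceptual but a matter of matching hypotheses: one must verify the inequality $b(r) \le r$ so that the part-size conditions of Lemma~\ref{lem:transversal-red} (needing $s_i \le r-1$) and of Theorem~\ref{thm:parts} (needing $|P_i| \le b(r)$) hold simultaneously, and one must isolate the small-$N$ degeneracy where the partition theorem does not literally apply. All of the genuine topological and inductive content resides upstream in Theorem~\ref{thm:parts}, and hence in Lemmas~\ref{lem:bounds}, \ref{lem:prime-ind}, and~\ref{lem:2^t}, so this final step amounts to assembling those ingredients.
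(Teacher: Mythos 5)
Your proposal is correct and follows essentially the same route as the paper: the transversal reduction of Lemma~\ref{lem:transversal-red} followed by the partition chromatic number bound, which the paper invokes via Lemma~\ref{lem:bounds} plus the prime-divisor induction of Lemma~\ref{lem:prime-ind} (i.e., exactly the content you package as Theorem~\ref{thm:parts}). Your explicit verification that $b(r) \le r$ and your separate treatment of the degenerate range $N < rk$ are careful touches the paper's two-line proof leaves implicit, but they do not constitute a different argument.
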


\begin{proof}
	Let $N= \sum_i s_i$ and let $\P = \{P_1, \dots, P_n\}$ be a partition of~$[N]$ with~${|P_i| = s_i}$.
	Then by Lemma~\ref{lem:transversal-red} $\chi(\KG^r_{\s}(n,k)) \ge \chi(\KG^r(N, k; \P))$.
	For $r$ a prime $\chi(\KG^r(N, k; \P)) =  \lc \frac{N-r(k-1)}{r-1} \rc$ by Lemma~\ref{lem:bounds}.
	For arbitrary $r$ we can induct on the number of prime divisors using Lemma~\ref{lem:prime-ind}.
\end{proof}

The lower bound $\chi(\KG^r_{s-1}(n,k)) \ge \lc \frac{(s-1)n-r(k-1)}{r-1} \rc$ (for parameters as in Theorem~\ref{thm:main2})
implies our main result, Theorem~\ref{thm:main}: Any coloring of the vertices of $\KG^r_{s-1}(n,k)$ with less
than $\lc \frac{(s-1)n-r(k-1)}{r-1} \rc$ colors results in a monochromatic hyperedge. The vertices of this hyperedge
are $r$ elements of $\binom{[n]}{k}$ (counted with multiplicities) that receive the same color and such that any 
$s$ of them (with multiplicities) have an empty intersection.

\begin{remark}
\label{rem}
Conjecture~\ref{conj:stab} implies Conjecture~\ref{conj:part} which in turn would 
yield the generalized Erd\H os--Kneser conjecture in full generality. Extending Lemma~\ref{lem:bounds}
to prime powers~$r$ would imply Conjecture~\ref{conj:part} for partitions with parts $P_i$ of size
at most~$r-1$. Looking back at the proof of~{\cite[Thm.~4.7]{frick2017-2}} this would follow from
extending the optimal colored Tverberg theorem of Blagojevi\'c, Matschke, and Ziegler~\cite{blagojevic2009}
from primes $r$ to prime powers.
\end{remark}

\section{A conjecture of Abyazi Sani and Alishahi}
\label{sec:conj}

Let $A \subset [n]$. Abyazi Sani and Alishahi~\cite{abyazi2017} consider the subhypergraph $\KG^r_A(n,k)$
of~$\KG^r(n,k)$ induced by the vertices that are $k$-subsets $\sigma \subset [n]$ with $\sigma \not\subset A$.
They conjecture:

\begin{conjecture}[Abyazi Sani and Alishahi~\cite{abyazi2017}] 
\label{conj:aa} 
	Let $n \ge 2rk$. Then 
	$$\chi(KG^r_A(n,k)) = \lc \frac{n - \max\{r(k-1),|A|\}}{r-1}\rc.$$
\end{conjecture}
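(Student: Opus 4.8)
The plan is to prove the two inequalities separately: the upper bound by an explicit coloring, and the matching lower bound through the topological colorability defect. Write $a = |A|$; after relabeling $[n]$ we may assume $A = \{1,\dots,a\}$ and set $B = [n]\setminus A$, since $\KG^r_A(n,k)$ depends on $A$ only through $a$. It is natural to split along the two cases $a \le r(k-1)$ and $a > r(k-1)$ that govern the maximum in the statement.

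For the upper bound, when $a \le r(k-1)$ I would simply use $\KG^r_A(n,k) \subseteq \KG^r(n,k)$ and invoke the theorem of Alon, Frankl, and Lov\'asz, which gives $\chi(\KG^r_A(n,k)) \le \lc \frac{n-r(k-1)}{r-1}\rc = \lc \frac{n-\max\{r(k-1),a\}}{r-1}\rc$, the claimed value. When $a > r(k-1)$, every vertex $\sigma$ satisfies $\max\sigma \ge a+1$; I partition the possible values $a+1,\dots,n$ of $\max\sigma$ into $\lc \frac{n-a}{r-1}\rc$ consecutive windows of length $r-1$ and color $\sigma$ by the window containing $\max\sigma$. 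Any $r$ pairwise disjoint $k$-sets have $r$ distinct maxima, so a monochromatic hyperedge would force $r$ distinct values inside one window of size $r-1$ --- impossible. This uses exactly the claimed number of colors, and the hypothesis $n \ge 2rk$ guarantees that $B$ is large enough for the construction to make sense.

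The lower bound is the heart of the matter. Let $\F_A$ be the system of $k$-subsets of $[n]$ not contained in $A$. For prime $r$ I would apply Lemma~\ref{lem:tcd} with the partition $\P$ of $[n]$ into singletons: each part has size $1 \le r-1$, the transversal condition $|\sigma\cap P_i|\le 1$ is vacuous, so $\KG^r(\F_A;\P) = \KG^r_A(n,k)$, and Lemma~\ref{lem:tcd} yields $\chi(\KG^r_A(n,k)) \ge \lc \tcd^r(\F_A)/(r-1)\rc$. Everything therefore reduces to the estimate
$$\tcd^r(\F_A) \ge n - \max\{r(k-1),a\}.$$
The complex $K$ attached to $\F_A$ has as faces exactly the sets contained in $A$ together with the sets of fewer than $k$ elements, i.e.\ $K$ is the union of the full simplex on $A$ with the $(k-2)$-skeleton of the simplex on $[n]$; I would establish the displayed bound by computing the connectivity of the $r$-fold pairwise-deleted join of $K$ with the join-and-deletion techniques of~\cite{frick2017-2} that underlie Lemma~\ref{lem:tcd}. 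I stress that the purely combinatorial colorability defect only gives $\cd^r(\F_A) = n - \max\{r(k-1),\,a+(r-1)(k-1)\}$, which is weaker by $(r-1)(k-1)$ as soon as $a > k-1$; so the topological refinement is genuinely needed, and this is exactly what separates the statement from a routine Dol'nikov-type bound.

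For composite $r$ I would pass from primes to prime powers and then to general $r$ by inducting on the prime divisors, in the spirit of Lemma~\ref{lem:prime-ind}. The difficulty is that the packing step there replaces each vertex by an $r_2$-tuple of disjoint $k$-sets, and the constraint ``not contained in $A$'' does not survive this substitution cleanly; keeping it under control appears to require $A$ to be comfortably smaller than what $r$ permits, which I expect to be the source of the restriction to several cases rather than to all triples $(r,k,a)$. Thus the two real obstacles are (i) the equivariant-topological computation of $\tcd^r(\F_A)$ for prime $r$, equivalently the $\Z_r$-index of the deleted join of $K$ above, and (ii) transferring this estimate through the prime-power induction --- the latter being the same phenomenon that forces the $b(r)$-type hypothesis in Theorem~\ref{thm:main}.
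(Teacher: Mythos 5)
You are attempting to prove a statement that the paper records as a \emph{conjecture}: the paper itself does not prove Conjecture~\ref{conj:aa}, and it remains open in the band $2k-1 \le |A| \le rk-2$ except in partial cases, so the honest hedging in your write-up is appropriate --- but it also means your proposal has a genuine gap rather than a complete argument. Concretely, everything in your prime case hinges on the asserted estimate $\tcd^r(\F_A) \ge n - \max\{r(k-1), |A|\}$, which you state as ``everything therefore reduces to'' but never establish; that estimate \emph{is} the whole problem. If it held for all primes $r$ and all $|A| \le rk-2$, it would settle the conjecture for primes in full, strictly beyond the paper: for an odd prime $r$ the paper reaches only $|A| \le (r-1)(k-1)$, and its closest analogue of the computation you propose --- Theorem~\ref{thm:t-wide}, which bounds $\tcd^r$ of the $t$-wide $k$-sets using the Bukh--Loh--Nivasch point configurations --- only works for $t \le r(k-3)+2$, illustrating that such bounds are delicate rather than routine. (Your surrounding pieces are fine: the singleton-partition reduction via Lemma~\ref{lem:tcd} correctly gives $\KG^r(\F_A;\P) = \KG^r_A(n,k)$ for prime $r$; both upper-bound colorings work; and your computation $\cd^r(\F_A) = n - \max\{r(k-1),\, |A| + (r-1)(k-1)\}$ is correct, though the deficit against the target is $|A|-(k-1)$ in the regime $k-1 \le |A| \le r(k-1)$ and reaches $(r-1)(k-1)$ only once $|A| \ge r(k-1)$.) Your obstacle (ii), that the constraint $\sigma \not\subset A$ does not survive the prime-divisor induction, is likewise real and is exactly why the conjecture is open for composite $r$.

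For the case the paper does prove, $|A| \le b(r)(k-1)$, it takes a route that sidesteps both of your obstacles and never touches $\tcd^r(\F_A)$: choose a partition $\P = \{P_1,\dots,P_\ell\}$ of $[n]$ with $|P_i| = b(r)$ for $i \in [k-1]$, $|P_i| \le b(r)$ otherwise, and $A \subset \bigcup_{i=1}^{k-1} P_i$. Any $k$-set transversal to $\P$ meets each of the first $k-1$ parts at most once, hence cannot lie inside their union, so $\KG^r(n,k;\P)$ is a subhypergraph of $\KG^r_A(n,k)$, and Theorem~\ref{thm:parts} gives the lower bound $\lc \frac{n-r(k-1)}{r-1}\rc$; since $b(r) \le r$, here $\max\{r(k-1),|A|\} = r(k-1)$, so this matches the conjectured value. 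Note how this handles composite $r$: the prime-divisor induction (Lemma~\ref{lem:prime-ind}) is run entirely on the transversal hypergraphs $\KG^r(n,k;\P)$, where the partition constraint does survive the substitution of vertices by disjoint tuples, and the constraint involving $A$ enters only at the very end through the subhypergraph relation. If you want to pursue your $\tcd^r(\F_A)$ idea, the proof of Theorem~\ref{thm:t-wide} is the template to imitate, but expect it to cover only a restricted range of $|A|$ rather than the full conjecture.
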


Abyazi Sani and Alishahi show that the conjecture holds when $|A| \le 2(k-1)$ or $|A| \ge rk-1$, 
so the open case left is when $2k-1 \le |A| \le rk-2$. For this open case, they establish the inequality 
$$\lc \frac{n - r(k-1) - \lf \frac{|A|}{k}\rf }{r-1}\rc \le \chi(KG^r_A(n,k)) \le \lc \frac{n - \max\{r(k-1),|A|\}}{r-1}\rc.$$
Here we show:

\begin{theorem}
	Let $r \ge 2$.
	Then Conjecture~\ref{conj:aa} holds whenever~${|A| \le b(r)\cdot (k-1)}$. 
\end{theorem}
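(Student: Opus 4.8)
The plan is to establish the two matching inequalities separately. First observe that $b(r) \le r$ always, so the hypothesis $|A| \le b(r)(k-1)$ forces $|A| \le r(k-1)$ and hence $\max\{r(k-1),|A|\} = r(k-1)$; the value claimed in Conjecture~\ref{conj:aa} is therefore $\lc \frac{n-r(k-1)}{r-1}\rc$. The upper bound is immediate: $\KG^r_A(n,k)$ is an induced subhypergraph of $\KG^r(n,k)$, so any proper coloring of the latter restricts to one of the former, giving $\chi(\KG^r_A(n,k)) \le \chi(\KG^r(n,k)) = \lc \frac{n-r(k-1)}{r-1}\rc$ by the theorem of Alon, Frankl, and Lov\'asz together with the greedy coloring that matches it. Thus everything reduces to proving the lower bound $\chi(\KG^r_A(n,k)) \ge \lc \frac{n-r(k-1)}{r-1}\rc$.

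For the lower bound I would exhibit a partition $\P$ of $[n]$ with all parts of size at most $b(r)$ for which $\KG^r(n,k;\P)$ is a subhypergraph of $\KG^r_A(n,k)$; then Theorem~\ref{thm:parts} finishes the argument, since it gives $\chi(\KG^r(n,k;\P)) = \lc \frac{n-r(k-1)}{r-1}\rc$ (the hypothesis $n \ge 2rk$ in Conjecture~\ref{conj:aa} supplies the needed $n \ge rk$). The construction is where the numerical assumption is used decisively: because $|A| \le b(r)(k-1)$, we can cover $A$ by at most $k-1$ blocks, each of size at most $b(r)$, and then split $[n]\setminus A$ into singletons. Writing the resulting partition as $\P = \{P_1, \dots, P_\ell\}$, every part has size at most $b(r)$, and at most $k-1$ of the parts meet $A$.

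It then remains to verify the subhypergraph claim and assemble the pieces. Any vertex $\sigma$ of $\KG^r(n,k;\P)$ is a $k$-element set with $|\sigma \cap P_i| \le 1$ for all $i$; since at most $k-1$ parts meet $A$, such a transversal contains at most $k-1$ elements of $A$, so $\sigma \not\subset A$ and $\sigma$ is indeed a vertex of $\KG^r_A(n,k)$. A hyperedge of $\KG^r(n,k;\P)$ is a collection of $r$ pairwise disjoint such transversals, and this is then also a hyperedge of $\KG^r_A(n,k)$, so $\KG^r(n,k;\P)$ is a subhypergraph of $\KG^r_A(n,k)$ and $\chi(\KG^r_A(n,k)) \ge \chi(\KG^r(n,k;\P)) = \lc \frac{n-r(k-1)}{r-1}\rc$. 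Combining with the upper bound gives the claimed equality. The only genuinely load-bearing step is the counting in the construction: the inequality $|A| \le b(r)(k-1)$ is exactly what lets us absorb $A$ into at most $k-1$ admissible parts, which is precisely the condition preventing any transversal of $\P$ from fitting inside $A$; the rest is routine bookkeeping.
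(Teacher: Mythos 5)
Your proposal is correct and follows essentially the same route as the paper: you build a partition of $[n]$ into parts of size at most $b(r)$ so that $A$ is covered by at most $k-1$ parts (forcing every transversal to escape $A$), observe that $\KG^r(n,k;\P)$ is then a subhypergraph of $\KG^r_A(n,k)$, and invoke Theorem~\ref{thm:parts} for the lower bound. Your extra details---noting $b(r)\le r$ so that $\max\{r(k-1),|A|\}=r(k-1)$, checking the subhypergraph containment, and supplying the matching upper bound---merely make explicit what the paper leaves implicit.
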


\begin{proof}
	Write $b$ for~$b(r)$. If $|A| \le b(k-1)$ we take a partition $P_1, \dots, P_\ell$ of $[n]$ such that $|P_i|= b$ for all $i \in [k-1]$, 
	$|P_i| \le b$ for all $k \le i \le \ell$, and $A \subset \bigcup_{i=1}^{k-1}P_i$. 
	Then $\KG^r(n,k;P_1, \dots, P_\ell)$ is a subgraph of $\KG^r_A(n,k)$. 
	Then the result follows from Theorem~\ref{thm:parts}.
\end{proof}

We will further extend this result to a larger class of Kneser hypergraphs in the next section.

\section{Lower bounds for the chromatic number of wide subhypergraphs}
\label{sec:subhyp}

When defining the hypergraph $\KG^r_A(n,k)$ we can by symmetry always assume that $A = \{1, 2, \dots, |A|\}$,
and thus we constrain $\KG^r(n,k)$ by only retaining those vertices that correspond to sets that are not contained 
in the initial interval of length $|A|$ in~$[n]$. More generally, we can disregard vertices that correspond to sets that
are contained in any interval of some fixed length, and still establish the same lower bounds for the chromatic number:
For an integer $t \ge 1$ denote by $\KG^r(n,k)_{t-\mathrm{wide}}$ the subhypergraph of $\KG^r(n,k)$ induced by
those vertices that correspond to $k$-subsets $\sigma$ in~$[n]$ that are not contained in any of the sets 
$\{i, i+1, \dots, i+t-1\}$ for~${i \in [n-t+1]}$. We call such sets $\sigma$ \emph{$t$-wide}.

Lower bounds for these subhypergraphs of Kneser graphs induced by ``wide'' $k$-sets in $[n]$ are interesting in the context
of the generalized Erd\H os--Kneser conjecture. For a partition $\P =\{P_1, \dots, P_\ell\}$ of $[n]$ and an integer $t \ge 1$
let $\KG^r(n,k;\P)_{t-\mathrm{wide}}$ be the subhypergraph of $\KG^r(n,k)$ whose vertices correspond to $k$-sets
$\sigma \subset [n]$ that are $t$-wide and transversal to $\P$, that is, $|\sigma \cap P_i| \le 1$ for each~${i \in [\ell]}$.
We will extend our lower bounds for the chromatic number to $\KG^r(n,k;\P)_{t-\mathrm{wide}}$ for certain values of $t$
and partitions with~${|P_i| \le r-1}$ provided that $r$ is a prime. Using the construction of Lemma~\ref{lem:transversal-red}
we thus obtain lower bounds for the chromatic number of certain natural subhypergraphs of~$\KG^r_{\s}(n,k)$.
This is analogous to a result of Schrijver~\cite{schrijver1978} who identifies vertex-critical subgraphs of 
Kneser graphs~$\KG^2(n,k)$. Natural subhypergraphs of $\KG^r(n,k)$ that still satisfy the same lower bound
for the chromatic number were identified by Meunier~\cite{meunier2011}, Alishahi and Hajiabolhassan~\cite{alishahi2015}, 
and the fourth author~\cite{frick2017-2}.

\begin{theorem}
\label{thm:t-wide}
	Let $k \ge 1$ be an integer, $r \ge 2$ a prime, and $n \ge rk$ an integer. Let $\P = \{P_1, \dots, P_\ell\}$ be a partition
	of~$[n]$ with~${|P_i| \le r-1}$. Let $t \le r(k-3)+2$. Then
	$$\chi(\KG^r(n,k;\P)_{t-\mathrm{wide}}) = \lc \frac{n-r(k-1)}{r-1} \rc.$$
\end{theorem}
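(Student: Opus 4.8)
The plan is to prove the two inequalities separately and spend essentially all the effort on the lower bound. The upper bound is immediate: $\KG^r(n,k;\P)_{t-\mathrm{wide}}$ is an induced subhypergraph of $\KG^r(n,k;\P)$, obtained by discarding the vertices that are not $t$-wide, so any proper coloring of the larger hypergraph restricts to a proper coloring of the smaller one. Since $r$ is prime, $|P_i|\le r-1$, and $n\ge rk$, Lemma~\ref{lem:bounds} applies and gives $\chi(\KG^r(n,k;\P)_{t-\mathrm{wide}}) \le \chi(\KG^r(n,k;\P)) = \lc \frac{n-r(k-1)}{r-1}\rc$.

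For the lower bound I would apply Lemma~\ref{lem:tcd} to the family $\F$ of all $t$-wide $k$-subsets of $[n]$. By the definitions, $\KG^r(\F;\P) = \KG^r(n,k;\P)_{t-\mathrm{wide}}$, and since $r$ is prime and $|P_i|\le r-1$ it suffices to show $\tcd^r(\F) \ge n-r(k-1)$. The inclusion-minimal members of $\F$ are the $t$-wide $k$-subsets themselves (they all have size $k$), so the complex $K$ attached to $\F$ has exactly these as minimal nonfaces. Hence a set $S$ is a face of $K$ if and only if it contains no $t$-wide $k$-subset, i.e.\ if and only if $|S|\le k-1$ or $S$ is contained in one of the length-$t$ windows $\{i,i+1,\dots,i+t-1\}$. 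Thus $K$ is the union of the $(k-2)$-skeleton of the simplex on $[n]$ with the (overlapping) simplices spanned by these windows.

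The crux, and what I expect to be the main obstacle, is to prove $\tcd^r(\F) \ge n-r(k-1)$: for some $N\ge n$ I must exhibit a continuous map $K\to\R^d$ identifying no $r$ points from $r$ pairwise disjoint faces with $N-(r-1)(d+1)\ge n-r(k-1)$. This step is genuinely topological and cannot be shortcut combinatorially: a single length-$t$ window is an $\F$-independent set, and for $t$ in the allowed range one can cover $[n]$ by $r$ disjoint windows (e.g.\ at $n=rk$, $k\ge 4$, where $t$ may reach $r(k-3)+2\ge k$), so the combinatorial colorability defect $\cd^r(\F)$ drops all the way to $0$ while $n-r(k-1)=r>0$. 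Consequently one cannot realize the defect by an affine map and must construct a bona fide nonlinear Tverberg-free map. The plan is to begin with a Tverberg-free map on the $(k-2)$-skeleton witnessing $\tcd^r(\binom{[n]}{k})\ge n-r(k-1)$ (as furnished by~\cite[Lemma~4.5]{frick2017-2}) and then extend it across the added window-simplices while keeping it free of $r$-fold coincidences among pairwise disjoint faces. Here the hypothesis enters precisely through $t\le r(k-3)+2 = r(k-1)-2(r-1)$, which bounds the dimension $t-1$ of the window-simplices against the available budget $r(k-1)$ and leaves enough room to carry out the extension; in equivariant language, it is the condition under which the relevant $\Z_r$-map from the deleted $r$-fold join of $K$ into the representation sphere still exists.

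Combining the two bounds yields the claimed equality. The only part that requires real work beyond bookkeeping is the construction in the previous paragraph: the windows pairwise overlap and meet skeleton faces, so the extension must avoid creating disjoint-face coincidences precisely where these pieces interact, and it is in controlling this interaction — using $t\le r(k-3)+2$ to keep the window dimension within the dimension budget — that the substance of the proof lies.
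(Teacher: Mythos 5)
Your framing is right up to the decisive step: the upper bound via restriction and Lemma~\ref{lem:bounds}, the reduction of the lower bound to $\tcd^r\bigl(\binom{[n]}{k}_{t-\mathrm{wide}}\bigr) \ge n-r(k-1)$ via Lemma~\ref{lem:tcd}, and the description of the complex $K$ all match the paper. But the heart of the proof --- actually exhibiting the map --- is missing from your proposal, and the plan you sketch for it rests on a false premise. You argue that since disjoint length-$t$ windows can cover $[n]$ (so the combinatorial defect $\cd^r$ vanishes), ``one cannot realize the defect by an affine map and must construct a bona fide nonlinear Tverberg-free map.'' This is a non sequitur: the vanishing of $\cd^r$ only shows that generic configurations and the combinatorial bound fail, not that every affine map does. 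Disjoint windows are indeed pairwise disjoint faces of $K$, but their images under an affine map intersect only if the chosen point configuration admits that particular partition as a Tverberg partition. The paper's proof is in fact affine: it enlarges the vertex set to $[n+m]$, where $r(k-1)-1 \ge (r-1)d > r(k-2)$ and $m=(r-1)d-r(k-2)-1$, and places the vertices at a point sequence of Bukh, Loh, and Nivasch~\cite{bukh2016} whose Tverberg partitions are \emph{precisely the colorful ones}. Then a coincidence among pairwise disjoint faces $\sigma_1,\dots,\sigma_r$ can be pruned to a colorful partition of a support of $(r-1)(d+1)+1$ points; a counting argument (this is where the $m$ extra free vertices are essential) forces some $\sigma_j$ with $|\sigma_j \cap [n]| \ge k$; and colorfulness forces $\sigma_j$ to meet both the lowest $r$ and the highest $r$ elements of the support, so it spans an interval of length at least $(r-1)(d-1) \ge r(k-3)+2 \ge t$ and hence contains a $t$-wide $k$-subset of $[n]$ --- a nonface, contradiction. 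Note that the hypothesis $t \le r(k-3)+2$ enters through exactly this interval-length estimate, not as a dimension budget for an extension.

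By contrast, your proposed route --- start from a Tverberg-free map on the $(k-2)$-skeleton and ``extend it across the added window-simplices while keeping it free of $r$-fold coincidences,'' asserting that in equivariant language the relevant $\Z_r$-map into the representation sphere ``still exists'' --- is not an argument but a restatement of the goal. No obstruction-theoretic or connectivity computation is offered, and none is routine here: the windows overlap one another and the skeleton, the deleted join of $K$ is not a standard connected space for which existence of such an equivariant map follows from dimension counting, and controlling coincidences during a cell-by-cell extension is exactly the difficulty the paper's special affine configuration is designed to circumvent. So while your reduction and bookkeeping are correct and consistent with the paper, the proposal has a genuine gap at the only step that carries the mathematical content, and the heuristic guiding your plan (affine maps cannot work) is refuted by the paper's own construction.
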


\begin{proof}
	Denote the $k$-element subsets of $[n]$ that are $t$-wide by~$\binom{[n]}{k}_{t-\mathrm{wide}}$. Then according
	to Lemma~\ref{lem:tcd} we have to show that $\tcd^r(\binom{[n]}{k}_{t-\mathrm{wide}}) \ge n-r(k-1)$.
	Let $d$ be the integer satisfying $r(k-1)-1 \ge (r-1)d > r(k-2)$ and let $m = (r-1)d - r(k-2) - 1$.
	Let $K$ be the simplicial complex on vertex set $[n+m]$ whose missing faces are precisely the $t$-wide
	$k$-element subsets of~$[n]$. If we can show that there is a continuous map $f \colon K \longrightarrow \R^d$
	that does not identify $r$ points from $r$ pairwise disjoint faces of~$K$, then by definition of topological 
	$r$-colorabilty defect 
	$$\tcd^r\left(\binom{[n]}{k}_{t-\mathrm{wide}}\right) \ge n+m - (r-1)(d+1) = n + (r-1)d -r(k-2) - 1 - (r-1)(d+1) = n-r(k-1).$$
	
	We define the map $f \colon K \longrightarrow \R^d$ as an affine map. To define the images of the vertex set of $K$
	we need a result of Bukh, Loh, and Nivasch~\cite{bukh2016} on the existence of point configurations with certain 
	\emph{Tverberg partitions}, i.e., partitions of the point set into $r$ parts whose convex hulls all share a common point.
	In order to state their result we need the following definitions:
	A partition of $[(r-1)(d+1)+1]$ into $r$ parts $X_1, \dots, X_r$ is called \emph{colorful} if for each $k \in [d+1]$ 
	the set $Y_k = \{(r-1)(k-1)+1, \dots, (r-1)k+1\}$ satisfies $|Y_k \cap X_i| = 1$ for all~$i$. Given a sequence 
	$x_1, \dots, x_N$ of points in~$\R^d$ we say that a partition $X_1 \sqcup \dots \sqcup X_r$ of $[(r-1)(d+1)+1]$
	\emph{occurs} as a Tverberg partition if there is a subsequence $x_{i_1}, \dots, x_{i_{\widehat N}}$ of length
	$\widehat N = (r-1)(d+1)+1$ such that $\conv\{x_{i_k} \: | \: k \in X_1\} \cap \dots \cap \conv\{x_{i_k} \: | \: k \in X_r\} \ne \emptyset$.
	The result of Bukh, Loh, and Nivasch~\cite{bukh2016} can now be phrased as: There are arbitrarily long 
	sequences of points in $\R^d$ such that the Tverberg partitions that occur are precisely the colorful ones. 
	The significance of the number $\widehat N = (r-1)(d+1)+1$ is explained by Tverberg's theorem~\cite{tverberg1966},
	which guarantees that any point set in $\R^d$ of size $\widehat N$ can be split into $r$ parts whose
	convex hulls share a common point, while any point set of size $\widehat N-1$ and in sufficiently general position
	does not admit such a Tverberg partition. We remark that for the sequences whose Tverberg partitions are
	precisely the colorful ones, no subset of size $\widehat N-1$ can admit a Tverberg partition.
	
	Let $\{x_1, \dots, x_{n+m}\}$ be a point set in $\R^d$ whose Tverberg partitions are precisely the colorful ones.
	Define the map $f$ by sending vertex $i$ of $K$ to $x_i$ and extending linearly onto the faces of~$K$.
	Then the faces of~$K$ map precisely to the convex hulls of the images of its vertices.
	Now suppose that $K$ has pairwise disjoint faces $\sigma_1, \dots, \sigma_r$ such that
	$f(\sigma_1) \cap \dots \cap f(\sigma_r) \ne \emptyset$. Then without loss of generality (by Tverberg's theorem) the 
	$\sigma_i$ involve precisely $(r-1)(d+1)+1$ vertices. More precisely, choose a vertex $v$ of the convex
	polytope $f(\sigma_1) \cap \dots \cap f(\sigma_r)$, and for each $i \in [r]$ only consider a minimal set of 
	vertices of $\sigma_i$ whose convex hull captures~$v$. This reduces the total number of vertices to $(r-1)(d+1)+1$.
	There is a $j \in [r]$ such that $|\sigma_j \cap [n]| \ge k$.
	This is because $|\sigma_i \cap [n]| \le k-1$ for every $i \in [n]$ implies that the $\sigma_i$ involve at most
	$r(k-1)+m = (r-1)(d+1)$ vertices---a contradiction. The $\sigma_i$ are a colorful partition of some subset $A$
	of~$[n+m]$, which implies that $\sigma_j$ must have an element among the lowest $r$ elements of $A$ 
	as well as among the highest $r$ elements of~$A$. Then $\sigma_j$ cannot be contained in an interval
	of length~${(r-1)(d-1)-1}$. Now $(r-1)(d-1) \ge r(k-3)+2 \ge t$, and $\sigma_j$ contains a $k$-subset of 
	$[n]$ with two elements at distance at least~$t$. Thus $\sigma_j$ is a nonface of $K$, which is a contradiction
	since we chose $\sigma_1, \dots, \sigma_r$ to be faces of~$K$.
\end{proof}

For an integer $t \ge 1$ we denote by $\KG^r_{\s}(n,k)_{t-\mathrm{wide}}$ the subhypergraph of $\KG^r_{\s}(n,k)$
induced by those vertices that correspond to $t$-wide $k$-subsets of~$[n]$.

\begin{corollary}
\label{cor:subhyp}
	Let $r \ge 2$ be a prime. Let $k \ge 1$ and $n \ge k$ be integers and $\s = (s_1, \dots, s_n)$ a 
	vector of positive integers~${s_i \in [r-1]}$. Let $t \ge 1$ be the maximal integer such that
	$\sum_{i=j}^{j+t-1} s_i \le r(k-3)+2$ for every~${j \in [n-t+1]}$.
	Then $\chi(\KG^r_{\s}(n,k)_{t-\mathrm{wide}}) \ge \lc \frac{(\sum_i s_i)-r(k-1)}{r-1} \rc$.
	In particular, $\chi(\KG^r_{s-1}(n,k)_{t-\mathrm{wide}}) \ge \lc \frac{(s-1)n-r(k-1)}{r-1} \rc$,
	where $t = \lf \frac{r(k-3)+2}{s-1} \rf$.
\end{corollary}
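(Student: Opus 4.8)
The plan is to reduce the statement about the transversal hypergraph $\KG^r_{\s}(n,k)_{t-\mathrm{wide}}$ to the chromatic number of a ``wide'' transversal Kneser hypergraph $\KG^r(N,k;\P)_{t'-\mathrm{wide}}$, to which Theorem~\ref{thm:t-wide} applies directly. This mirrors the passage from Theorem~\ref{thm:main2} back through Lemma~\ref{lem:transversal-red}, but now we must carry the ``wideness'' constraint across the projection $f\colon[N]\longrightarrow[n]$ that collapses each part $P_i$ to the point~$i$.

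First I would set $N=\sum_i s_i$ and choose the partition $\P=\{P_1,\dots,P_n\}$ of $[N]$ with $|P_i|=s_i$ so that the parts consist of \emph{consecutive} blocks of $[N]$, i.e.\ $P_i=\{\,1+\sum_{j<i}s_j,\ \dots,\ \sum_{j\le i}s_j\,\}$. Then the projection $f$ is order-preserving in the sense that $\sigma\subset[N]$ being transversal to $\P$ and ``spread out'' corresponds to $f(\sigma)\subset[n]$ being spread out. Concretely, I would show that if $\sigma$ is transversal to $\P$ and its image $f(\sigma)$ is $t$-wide in $[n]$, then $\sigma$ itself is $t'$-wide in $[N]$ for the appropriate $t'$: an interval of length $t'$ in $[N]$ maps under $f$ into an interval of length at most $t$ in $[n]$, where the relationship between $t$ and $t'$ is governed by the block sizes $s_i$. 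The combinatorial heart of the reduction is choosing $t'$ so that Theorem~\ref{thm:t-wide}'s hypothesis $t'\le r(k-3)+2$ holds; since the blocks may have unequal sizes, one wants $t'$ at least as large as $r(k-3)+2$ measured against the maximal cumulative block-length, which is exactly why the corollary's hypothesis is phrased as $\sum_{i=j}^{j+t-1}s_i\le r(k-3)+2$ for all windows $[j,j+t-1]$.

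Having fixed $\P$ as consecutive blocks, I would run the coloring-transfer argument of Lemma~\ref{lem:transversal-red} verbatim, but restricted to wide vertices: given an $m$-coloring $c$ of $\KG^r_{\s}(n,k)_{t-\mathrm{wide}}$, define $c'(A)=c(f(A))$ for each $k$-set $A\subset[N]$ that is transversal to $\P$ and $t'$-wide. The key point is that $f(A)$ is then a legitimate vertex of $\KG^r_{\s}(n,k)_{t-\mathrm{wide}}$—it is $k$-element because $A$ is transversal, and it is $t$-wide because $A$ is $t'$-wide under the block structure—so $c'$ is well defined. The monochromatic-hyperedge obstruction carries over exactly as in the original lemma: a monochromatic pairwise-disjoint $r$-tuple in $\KG^r(N,k;\P)_{t'-\mathrm{wide}}$ pushes forward to $r$ sets $f(A_1),\dots,f(A_r)$ whose images violate the $\s$-intersection condition, because each part $P_\ell$ has only $s_\ell$ elements. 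This yields $\chi(\KG^r_{\s}(n,k)_{t-\mathrm{wide}})\ge\chi(\KG^r(N,k;\P)_{t'-\mathrm{wide}})$, and Theorem~\ref{thm:t-wide} evaluates the right-hand side as $\lceil\frac{N-r(k-1)}{r-1}\rceil=\lceil\frac{(\sum_i s_i)-r(k-1)}{r-1}\rceil$.

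The main obstacle I anticipate is the bookkeeping that connects the one-dimensional ``wideness'' parameter $t$ on $[n]$ to the parameter $t'$ on $[N]$ through the non-uniform block sizes. Getting the inequality in the right direction—ensuring that $t$-wideness of $f(A)$ forces enough wideness of $A$ to invoke Theorem~\ref{thm:t-wide} with its threshold $r(k-3)+2$, while simultaneously guaranteeing that the hypothesis on windows $\sum_{i=j}^{j+t-1}s_i$ is exactly what licenses $t'=r(k-3)+2$—requires care, since a naive uniform bound would lose a factor of $\max_i s_i$. The ``in particular'' clause, where all $s_i=s-1$, is the clean case: the windows become $(t)(s-1)\le r(k-3)+2$, giving $t=\lfloor\frac{r(k-3)+2}{s-1}\rfloor$ immediately, which serves as a useful sanity check on the general block-size estimate.
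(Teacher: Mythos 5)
Your proposal is correct and follows essentially the same route as the paper: consecutive blocks $P_j = \{\sum_{i<j} s_i + 1, \dots, \sum_{i\le j} s_i\}$, the coloring transfer of Lemma~\ref{lem:transversal-red} restricted to wide vertices, and Theorem~\ref{thm:t-wide} applied with $\overline t = r(k-3)+2$. One caution: the auxiliary claim in your second paragraph is stated backwards --- an interval of length $t'$ in $[N]$ can meet more than $t$ blocks, so it need not map into a $t$-window --- and the correct statement, which is the one you actually use in your third paragraph (and the paper's one-line justification), is the contrapositive: a $t$-window $\{j,\dots,j+t-1\}$ in $[n]$ pulls back to an interval of length $\sum_{i=j}^{j+t-1} s_i \le r(k-3)+2$ in $[N]$, so $\overline t$-wideness of $A$ forces $t$-wideness of $f(A)$.
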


\begin{proof}
	Let $N= \sum_i s_i$ and let $\P = \{P_1, \dots, P_n\}$ be the partition of~$[N]$ with 
	$$P_j = \left\{\sum_{i=1}^{j-1} s_i+1, \sum_{i=1}^{j-1} s_i+2,\dots, \sum_{i=1}^{j} s_i\right\}.$$
	Then as in the proof of Lemma~\ref{lem:transversal-red} 
	$\chi(\KG^r_{\s}(n,k)_{t-\mathrm{wide}}) \ge \chi(\KG^r(N, k; \P)_{\overline t-\mathrm{wide}})$,
	where~${\overline t = r(k-3)+2}$.
	This is because a $\overline t$-wide set in $[N]$ will project to a $t$-wide set in~$[n]$.
	Now use Theorem~\ref{thm:t-wide}.
\end{proof}

\section{Final remarks}

\begin{remark}
	Corollary~\ref{cor:subhyp} establishes the same lower bound for the chromatic number of
	$\KG^r_{\s}(n,k)_{t-\mathrm{wide}}$ for certain values of~$t$ as Theorem~\ref{thm:main2}
	for the hypergraph~$\KG^r_{\s}(n,k)$ of all $k$-subsets of~$[n]$. This can easily be extended
	to varying~$t$. For a vector $\s = (s_1, \dots, s_n)$ with $s_i \in [r-1]$ let $H$ be the 
	subhypergraph of $\KG^r_{\s}(n,k)$ where vertices correspond to those $k$-subsets
	$\sigma = \{\sigma_1, \dots, \sigma_k\} \subset [n]$ such that constructing a $k$-subset 
	of~$[N]$, $N =\sum_i s_i$, by choosing one point in each $P_{\sigma_i}$ as in the proof
	of Corollary~\ref{cor:subhyp} always results in a $(r(k-3)+2)$-wide set. Then
	$\chi(H) \ge \lc \frac{(\sum_i s_i)-r(k-1)}{r-1} \rc$ as before.
\end{remark}

\begin{remark}
	Our results extend to arbitrary set systems if $r$ is a prime. Let $\F$ be a system of
	subsets of~$[n]$. For a vector $\s = (s_1, \dots, s_n)$ with $s_i \in [r-1]$ denote by
	$\KG^r_{\s}(\F)$ the hypergraph on vertex set $\F$ where a multiset $\{\{F_1, \dots, F_r\}\}$,
	$F_i \in \F$, is a hyperedge if and only if $i$ is contained in at most $s_i$ of the sets
	$F_1, \dots, F_r$ for every~$i \in [n]$. Let $N = \sum_i s_i$ and let $\P =\{P_1, \dots, P_n\}$
	be a partition of $[N]$ with~$|P_i| = s_i$. Let $\F'$ be the system of subsets of~$[N]$ that
	for every $F\in \F$ contains the set $F' = \bigcup_{i \in F} P_i$. Then in complete analogy
	to Lemma~\ref{lem:transversal-red} one proves using Lemma~\ref{lem:tcd} that
	$$\chi(\KG^r_{\s}(\F)) \ge \chi(\KG^r(\F'; \P)) \ge \lc \frac{\tcd^r(\F')}{r-1} \rc.$$
\end{remark}

\section*{Acknowledgements}

This research was performed during the \emph{Summer Program for Undergraduate Research} 2017 at Cornell
University. The authors are grateful for the excellent research conditions provided by the program.
The authors would like to thank Maru Sarazola for many insightful conversations.
This manuscript was written while the fourth author was in residence at the Mathematical Sciences Research Institute 
in Berkeley, California, during the Fall 2017 semester supported by the National Science Foundation under 
Grant No. DMS--1440140.

\bibliographystyle{amsplain}


\end{document}